\theoremstyle{plain}
\newtheorem{Theorem}{Theorem}[section]
\newtheorem{Proposition}[Theorem]{Proposition}
\newtheorem{Lemma}[Theorem]{Lemma}
\newtheorem{Corollary}[Theorem]{Corollary}
\newtheorem{Remark}[Theorem]{Remark}
\newtheorem{Conjecture}[Theorem]{Conjecture}
\newcommand{\ol}{\bar}
\newcommand{\bfsigma}{\mbox{\boldmath$\sigma$}}
\newcommand{\Gal}{{\rm Gal}}
\def\hefresh{
   \def\hefreshD{\mathop{\raise1.5pt\hbox{${\smallsetminus}$}}}
   \def\hefreshS{\mathop{\raise0.85pt\hbox{$\scriptstyle\smallsetminus$}}}
   \mathchoice{\hefreshD}{\hefreshD}{\hefreshS}{\hefreshS}}
\renewcommand{\setminus}{\hefresh}
\def\Pp{{\mathbb{P}}}
\def\Aa{{\mathbb{A}}}
\def\Nn{{\mathbb{N}}}
\newcommand{\Spec}{\mathrm{Spec}}
\newcommand{\mf}{\mathfrak}
\begin{document}

\bibliographystyle{alpha}

\title[Ranks of abelian varieties and Mordell-Lang]{Ranks of abelian varieties and the full Mordell-Lang conjecture in dimension one}
\author{Arno Fehm and Sebastian Petersen}

\maketitle

\begin{abstract}
Let $A$ be a non-zero abelian variety
over a field $F$ that is not algebraic over a finite field. 
We prove that 
the rational rank
of the abelian group $A(F)$ is infinite
when $F$ is large in the sense of Pop (also called ample).
The main ingredient is a deduction of the 1-dimensional case of the
relative Mordell-Lang conjecture from a result of R\"ossler.
\end{abstract}

\par\medskip

\section{Introduction}

\noindent
A considerable amount of work has been done to show that
over certain fields,
for example over certain infinite algebraic extensions of global fields,
all abelian varieties have infinite rank.
Here, the {\bf rank} 
of an abelian variety $A$ over a field $F$ is
the rank of the Mordell-Weil group $A(F)$, i.e.~${\rm dim}_{\mathbb{Q}}(A(F)\otimes\mathbb{Q})$.
See \cite{Rosen1973,FreyJarden1974,RosenWong2002,Larsen2003,Im2006, geyerjarden2006,petersen,LarsenIm,ImLarsen12}
to mention a few examples.

In all cases known to us where one actually succeeded in proving
that over a field every abelian variety has infinite rank,
these fields turn out to be ample or are conjectured to be ample,
where a field $F$ is called {\bf ample} (or {\bf large} following \cite{Pop})
if every smooth $F$-curve $C$
satisfies $C(F)=\emptyset$ or $|C(F)|=\infty$.
For example, every separably closed, real closed, or Henselian valued field is ample.
See \cite{Pop,Jardenample,PopHenselian,FehmParan} for many more examples of ample fields,
and 
\cite{BarySorokerFehm,Pop_survey}
for surveys on the importance and ubiquity of ample fields
in contemporary Galois theory and other areas.

In this work, we prove (see Section \ref{sec_main}) our conjecture from \cite{FP}:

\begin{Theorem}\label{Main}
Let $F$ be an ample field that is not algebraic over a finite field
and let $A/F$ be a non-zero abelian variety. Then 
the rank of $A(F)$ is infinite.
\end{Theorem}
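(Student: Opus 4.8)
The plan is to argue by contradiction: I assume that $\rho := \dim_{\Qq}(A(F)\otimes\Qq)$ is finite and construct a smooth projective curve sitting inside a power of $A$ which has infinitely many $F$-points yet meets $A(F)$ in only finitely many of them. The two pillars of the argument are the defining property of ample fields (producing the infinitely many $F$-points) and the one-dimensional relative Mordell--Lang statement of the paper (producing the finiteness), and the whole difficulty is concentrated in making these two compatible in positive characteristic.

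First I would reduce to the case $\dim A\geq 2$. Since $\dim_{\Qq}(A^2(F)\otimes\Qq)=2\rho$, the rank of $A(F)$ is infinite if and only if that of $A^2(F)$ is, so after replacing $A$ by $A\times A$ in case $\dim A=1$ I may assume $g:=\dim A\geq 2$, with $0\in A(F)$. Fixing a sufficiently positive projective embedding of $A$ and intersecting with $g-1$ general hyperplanes through the origin, Bertini's theorem yields a smooth projective curve $C\subseteq A$ with $0\in C(F)$, of genus $\geq 2$, not contained in any translate of a proper abelian subvariety of $A$. Because $F$ is ample and $C$ is a smooth $F$-curve possessing the $F$-rational point $0$, the set $C(F)$ is infinite.

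Now let $\Gamma\subseteq A(\ol F)$ be the division hull of $A(F)$, i.e.\ the set of $x\in A(\ol F)$ with $nx\in A(F)$ for some $n\geq 1$; it has the same finite rank $\rho$. Since $C(F)\subseteq C(\ol F)\cap\Gamma$, it suffices to prove that $C(\ol F)\cap\Gamma$ is finite to reach the desired contradiction. In characteristic $0$ this is immediate from the finite-rank form of the Mordell--Lang conjecture (Faltings and Vojta, extended to division groups by McQuillan) applied to the genus $\geq 2$ curve $C$, and no further input is required.

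In characteristic $p$ there is a genuine obstruction: a curve that descends to $\ol{\Ff}_p$ can meet a finite-rank group in infinitely many points, so the Mordell--Lang statement holds only for \emph{non-isotrivial} $C$, and the delicate part is the control of the $p^\infty$-divisible points of $\Gamma$. This is precisely where the two hypotheses enter. The assumption that $F$ is not algebraic over a finite field furnishes an element $t\in F$ transcendental over $\Ff_p$, and I would choose the hyperplanes cutting out $C$ (still vanishing at the origin, so as to keep $0\in C(F)$) with coefficients involving $t$, realising $C$ as the generic member of a non-constant linear system and hence as a curve that does not descend to $\ol{\Ff}_p$. The required finiteness of $C(\ol F)\cap\Gamma$ for such a non-isotrivial $C$ is exactly the one-dimensional relative Mordell--Lang statement that the paper deduces from R\"ossler's theorem. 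I expect this deduction, together with the verification that the curve produced above is genuinely non-isotrivial, to be the main obstacle; the remainder of the argument is the standard interplay between Bertini's theorem and the defining property of ample fields.
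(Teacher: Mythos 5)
Your overall architecture (square $A$ to get $\dim A\ge 2$, produce a genus $\ge 2$ curve on $A$ with an $F$-point, let ampleness give infinitely many $F$-points, and contradict the one-dimensional Mordell--Lang statement, with non-isotriviality being the crux in characteristic $p$) matches the paper, but there is a genuine gap exactly at the step you flag as delicate: the non-isotriviality of your Bertini curve. Your inference that $C$, being ``the generic member of a non-constant linear system,'' therefore ``does not descend to $\ol{\mathbb{F}}_p$'' is a non sequitur. Non-constancy of the family of \emph{embedded} curves $H_1\cap\dots\cap H_{g-1}\cap A$ says nothing a priori about the variation of their \emph{abstract} isomorphism classes: what you need is that the induced map from the parameter space of linear sections through $0$ to the coarse moduli space $M_g$ is non-constant, and that you can hit a non-$\ol{\mathbb{F}}_p$-valued moduli point with a curve that is defined over $F$ and smooth with $0\in C(F)$. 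This is a real geometric assertion, not a formal consequence of choosing coefficients involving $t$: in the critical case where $A$ itself descends to $\ol{\mathbb{F}}_p$, plenty of hyperplane sections are honestly isotrivial, the ``isotrivial locus'' in the parameter space is a countable union of closed subsets, and when $F$ is countable (e.g.\ a Henselization of $\mathbb{F}_p(t)$) one cannot simply avoid countably many proper closed subsets by a dimension count over $F$. No argument for any of this is given, and supplying one (say via a Torelli-type analysis of which sections can be mutually isomorphic) is nontrivial; this is where the whole difficulty of the positive-characteristic case sits.

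The paper circumvents precisely this obstacle by reversing your construction. Instead of cutting a curve out of $A$ and hoping it is non-isotrivial, it first writes down an \emph{explicitly} non-isotrivial smooth $F$-curve $C$ of genus $\ge 2$ with an $F$-point (Lemma \ref{nonisotrivial}: a double, resp.\ triple, cover of an elliptic curve whose $j$-invariant is visibly transcendental over $\tilde{\mathbb{F}}_p$), and then uses an \'etale-local lifting lemma (Lemma \ref{lift}) to produce a curve $D\subseteq A$ carrying a smooth $F$-rational point together with a \emph{dominant} rational map $D\dashrightarrow C$. Since isotriviality passes down along dominant maps of curves (Lemma \ref{isotrivial}), $D$ is automatically non-isotrivial, Riemann--Hurwitz gives $g(D)\ge 2$, and then Proposition \ref{GM} yields the contradiction exactly as in your plan (the paper also needs Lemma \ref{isodown} to descend isotriviality from $\tilde F$ to $\tilde F_1$ in its relative formulation, which your absolute formulation avoids). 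So the missing idea in your proposal is: do not look for a non-isotrivial curve among the complete intersections on $A$; lift a ready-made non-isotrivial curve into $A$. The remainder of your argument (the reduction to $\dim A\ge2$, the use of ampleness, the division hull -- which is harmless but unnecessary, since $\Lambda=A(F)$ already has finite rank by assumption -- and the characteristic $0$ case via Faltings/McQuillan) is consistent with the paper.
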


In \cite{FP} we proved several special cases of Theorem \ref{Main}.
In particular, we prove the case when $F$ has characteristic zero.
In this work, we complete the case of positive characteristic.
In the proof of Theorem \ref{Main}, 
we use and strengthen results of Ghioca and Moosa \cite{GM} 
on the Mordell-Lang
conjecture in positive characteristic:
The 1-dimensional case of the Mordell-Lang conjecture is reduced to a special
case, which is then solved by recent work of R\"ossler \cite{roessler}.
What we get (see Section \ref{sec_ml}) is the following:

\begin{Theorem}\label{GMthm}
Let $K$ be an algebraically closed field of positive characteristic $p$,
$A/K$ an abelian variety,
$C$ a closed subcurve of $A$,
and $\Lambda\subseteq A(K)$ a subgroup of finite rank. 
If $\Lambda\cap C(K)$ is infinite, 
then $C$ is elliptic or birational to a curve defined over 
the algebraic closure of $\mathbb{F}_p$.
\end{Theorem}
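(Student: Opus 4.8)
The plan is to deduce the statement from the relative Mordell--Lang conjecture in positive characteristic, realised by combining the reduction of Ghioca--Moosa \cite{GM} with R\"ossler's theorem \cite{roessler} on $p$-divisible points. First I would make the usual harmless reductions. After translating we may assume $0\in C$, and after replacing $A$ by the abelian subvariety it generates we may assume that $C$ generates $A$. If $\dim A=1$ then $A$ is itself an elliptic curve and $C=A$, so $C$ is elliptic and we are done; hence I may assume $\dim A\geq 2$ and aim to show that $C$ is birational to a curve defined over $\overline{\mathbb{F}_p}$. Since $\Lambda$ has finite rank it is contained in the division group $\Gamma=\{x\in A(K):nx\in\Lambda_0\text{ for some }n\geq 1\}$ of a finitely generated subgroup $\Lambda_0$, and enlarging $\Lambda$ to $\Gamma$ only strengthens the hypothesis, so I assume $\Lambda=\Gamma$. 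Finally $A$, $C$ and the generators of $\Lambda_0$ are all defined over a finitely generated subfield of $K$, which I may take to be the function field $K_0$ of a variety over $\overline{\mathbb{F}_p}$; this places us in the geometric setting of \cite{roessler}, and the $\overline{\mathbb{F}_p}$-trace of $A$ is now defined.

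Next I would separate the division group into its prime-to-$p$ and its $p$-primary contributions. The intersection of $C$ with the prime-to-$p$ division points is governed by the prime-to-$p$ Mordell--Lang statement, where the inseparability phenomena of characteristic $p$ do not intervene; this part yields the coset structure and, because $C$ generates $A$ with $\dim A\geq 2$, it contributes only finitely many points unless $C$ is isotrivial. The genuine difficulty is concentrated in the infinitely $p$-divisible points, those lying in $\bigcap_{n}p^{n}A(K)$ up to the previous contribution, and this is exactly the regime controlled by \cite{roessler}. To bring it to bear I would split off the trace, writing $A$ up to isogeny as $B\times T$ with $T$ carrying the $\overline{\mathbb{F}_p}$-trace and $B$ of trivial trace. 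R\"ossler's theorem then forces the image of $C$ in $B$ to be a point, so that $C$ lies in a translate of $T$ and is therefore isotrivial, i.e.\ birational to a curve over $\overline{\mathbb{F}_p}$; the nontrivial trace direction accounts precisely for this alternative. The role of Ghioca--Moosa is to carry out and, as indicated in the introduction, to \emph{strengthen} the reduction of the full finite-rank intersection to this special case, and to pass from the finitely generated $\Lambda_0$ back to the finite-rank group $\Lambda$.

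The main obstacle is the control of the infinitely $p$-divisible points on a non-isotrivial abelian variety. In characteristic zero such points are essentially absent, but in characteristic $p$ the multiplication-by-$p$ isogeny is inseparable and these points genuinely obstruct the naive coset description; overcoming this is the substance of R\"ossler's theorem, which is the ingredient that lets one treat the case left open in \cite{FP}. The delicate part of the deduction is accordingly not any single computation but the bookkeeping that reduces the finite-rank statement to precisely R\"ossler's hypotheses --- in particular the clean separation of the trace, so that the two alternatives in the conclusion, $C$ elliptic and $C$ isotrivial, correspond exactly to the two ways the intersection $\Lambda\cap C(K)$ can be infinite.
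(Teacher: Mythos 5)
Your proposal correctly identifies the two ingredients (the Ghioca--Moosa reduction and R\"ossler's theorem), but the way you combine them has genuine gaps, and the argument as written does not close. First, the splitting of the division group $\Gamma$ of $\Lambda_0$ into a prime-to-$p$ part $\Gamma_{p'}$ and a $p$-primary part $\Gamma_{p}$ does not decompose the problem: $\Gamma=\Gamma_{p'}+\Gamma_{p}$ is only a sum, so a point of $C(K)\cap\Gamma$ need not lie in either summand, and analyzing $C\cap\Gamma_{p'}$ and $C\cap\Gamma_{p}$ separately says nothing about the mixed points. Handling exactly these mixed points is the hard content of the finite-rank case, and it is what \cite[Theorem 2.2]{GM} (via Scanlon's work on difference fields) accomplishes; but the output of that reduction is not a trace decomposition --- it is the statement that one may assume $\Lambda\subseteq A(F_{\rm ins})$ for a finitely generated field of definition $F$. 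Your proposal acknowledges \cite{GM} only as a black box that ``carries out the reduction to this special case'', yet the special case you then treat (division points and the trace-zero factor) is not the special case that \cite{GM} produces, so the two halves of your argument never meet.

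Second, you apply R\"ossler to the wrong object and with unverified hypotheses. The result of \cite{roessler} used here (Theorem \ref{roes} in the paper) says: if a curve of absolute genus at least $2$ over a finitely generated field $F$ has infinitely many points in $F_{\rm ins}$, then it is isotrivial. Its hypothesis concerns inseparable points, which become available only \emph{after} the Ghioca--Moosa reduction, and its conclusion is isotriviality of the curve itself, not that the image of $C$ in a trace-zero quotient $B$ is a point. The step ``R\"ossler's theorem then forces the image of $C$ in $B$ to be a point'' therefore invokes a statement the cited theorem does not make (you may be thinking of R\"ossler's separate work on purely inseparable points of abelian varieties of trivial trace, but even that would require the inseparability hypothesis you have not established). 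The paper's actual deduction is much shorter and avoids all of this: after \cite[Theorem 2.2]{GM} one may assume $\Lambda\subseteq A(F_{\rm ins})$, hence $\Lambda\cap C(K)$ infinite gives that $C(F_{\rm ins})$ is infinite; since $C$ sits on an abelian variety its absolute genus is at least $1$; if the genus is $1$ the normalization of $C$ is elliptic, and if it is at least $2$ then Theorem \ref{roes} gives isotriviality directly. Note also that the two alternatives in the conclusion correspond to this genus dichotomy, not (as your last paragraph suggests) to the trace versus trace-zero directions of $A$: an elliptic subcurve may or may not be isotrivial, and an isotrivial subcurve may have genus at least $2$.
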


Theorem \ref{Main} reproves and generalizes several
previous infinite rank results.
In Section \ref{sec:app} we give 
a few concrete applications of Theorem \ref{Main}.

\section*{Notation}

\noindent
Let $K$ be a field.
We denote by $\tilde{K}$ the
algebraic closure of $K$,
by $K_{\rm sep}$ the maximal separable,
and by $K_{\rm ins}$ the maximal purely inseparable
extension of $K$ in $\tilde{K}$. 
If $V$ is a $K$-scheme and $F/K$ is a field extension, then
$V_F:=V\times_{{\rm Spec}(K)}{\rm Spec}(F)$. 
If $V$ and $W$ are $K$-schemes and 
$f\colon V\to W$ is a $K$-morphism, then $f_F\colon V_F\to W_F$ stands for the base 
extension of $f$. 
Throughout this paper, a $K$-variety is a 
separated algebraic $K$-scheme which is geometrically integral, and 
a $K$-curve is a $K$-variety of dimension $1$. 
The absolute genus of a $K$-curve $C$ is the genus $g(C')$ of a smooth projective model $C'$ of $C_{\tilde{K}}$.
We denote by $J_{C'}$ the Jacobian variety of $C'$.
Subvarieties are always
understood to be closed in the ambient variety. 
If $f\colon V\to W$ is a morphism and $X\subseteq W$, then we define
$f^{-1}(X):=(V\times_W X)_{\rm red}$, the reduced induced scheme structure on the fiber product. 
If $f: G\to H$ is a homomorphism of group schemes over $K$, then 
$\mathrm{ker}(f):=G\times_{H, e} \mathrm{Spec}(K)$ is the scheme theoretic fibre of the unit section $e$.

\section{Isotrivial and special curves}

\noindent
In this section we prove some preliminary results about 
isotrivial curves and special curves
in the sense of Hrushovski, cf.~\cite{Hrushovski}.
The main conclusion here, probably known to experts, 
is that a subcurve of an abelian variety is special
if and only if it is elliptic or isotrivial.

{\em Throughout this section,
let $K/K_0$ be an extension of algebraically closed fields.}

A $K$-variety $V$ is {\bf $K/K_0$-isotrivial} (or simply isotrivial) if
there exists a $K_0$-variety $V_0$ and a $K$-birational map
$V\dashrightarrow V_{0,K}$.
Note that if $V$ is a smooth projective curve, then
$V$ is $K/K_0$-isotrivial if and only if 
there exists a $K_0$-variety $V_0$ and a {\em $K$-isomorphism}
$V\rightarrow V_{0,K}$,
i.e.~$V$ descends to $K_0$.
If $F$ is a subfield of $K$, then we say that an $F$-variety $V$ is $K/K_0$-isotrivial if $V_{K}$ is $K/K_0$-isotrivial.

The following lemma also appears 
as 
\cite[Fact 2.15, Remark 2.17]{BenoistBourscarenPillay},
but we include a full proof for the convenience of the reader:

\begin{Lemma}\label{isogeny}
Let $A/K$ be an abelian variety. Assume that 
there exists an abelian variety $B_0/K_0$ and an isogeny $\varphi: B_{0,K}\to A$.
If $\dim(A)=1$ or if $\varphi$ is separable, then there exists an abelian variety 
$A_0/K_0$ with $A_{0,K}\cong A$.
\end{Lemma}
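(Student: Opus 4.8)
The plan is to present $A$ as the quotient of $B_{0,K}$ by the finite subgroup scheme $G := \ker(\varphi)$ and to descend this presentation to $K_0$. Since the formation of a quotient of an abelian variety by a finite flat subgroup scheme commutes with base change, it suffices to produce a finite subgroup scheme $G_0 \subseteq B_0$ over $K_0$ with $(G_0)_K \cong G$ inside $B_{0,K}$; then $A_0 := B_0/G_0$ satisfies $(A_0)_K \cong B_{0,K}/G \cong A$. The whole argument therefore reduces to descending $G$, and the core of the matter is the separable case.

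First I would treat the case where $\varphi$ is separable, in any dimension. Then $G$ is a finite \'etale group scheme over the algebraically closed field $K$, hence constant, so it is determined by the finite subgroup $G(K) \subseteq B_{0,K}(K) = B_0(K)$, which consists of torsion points. The key observation is that every torsion point of $B_{0,K}$ already lies in $B_0(K_0)$: for the prime-to-$p$ part this is because the relevant torsion subschemes of $B_0$ are finite \'etale, hence constant over the algebraically closed field $K_0$ and unchanged by base change to $K$; for the $p$-power part one uses that a finite connected group scheme over a field has only the trivial rational point, so that the $K$-points of $B_0[p^m]$ factor through its \'etale quotient, which is again constant over $K_0$. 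Consequently $G(K) \subseteq B_0(K_0)$, and taking $G_0$ to be the corresponding constant (finite \'etale) subgroup scheme of $B_0$ gives $(G_0)_K \cong G$, completing the separable case.

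Next I would reduce the remaining case, $\dim(A) = 1$ with $\varphi$ possibly inseparable, to the separable case. Here $A$ and $B_0$ are elliptic curves, and I would invoke the standard factorization of an isogeny of curves as a power of the Frobenius isogeny followed by a separable isogeny: $\varphi = \varphi_s \circ F^n$, where $F^n \colon B_{0,K} \to B_{0,K}^{(p^n)}$ is the $p^n$-power relative Frobenius and $\varphi_s$ is separable. Because the formation of Frobenius twists is functorial in the base field, $B_{0,K}^{(p^n)}$ is canonically the base change of the twist $B_0^{(p^n)}$, which is an abelian variety over $K_0$; thus the source of $\varphi_s$ already descends to $K_0$. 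Applying the separable case to $\varphi_s$, with $B_0^{(p^n)}$ in place of $B_0$, then shows that $A$ descends to $K_0$.

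The main obstacle, and the reason for the hypotheses, is inseparability. The separable part of the argument never uses $\dim(A) = 1$, so the real content lies in the treatment of the inseparable contribution to $\ker(\varphi)$. For a general abelian variety the connected part of $\ker(\varphi)$ need not be a Frobenius kernel, so the intermediate purely inseparable quotient need not be a Frobenius twist and may fail to descend to $K_0$; this is exactly why the statement is restricted to $\dim(A) = 1$ or separable $\varphi$. The curve case works precisely because every inseparable isogeny of elliptic curves factors through a power of Frobenius, which is what lets me reduce it to the separable case. I expect that verifying this factorization, together with the compatibility of Frobenius twists with base change, will be the only genuinely delicate points; the descent of \'etale subgroups and the behaviour of torsion are then routine.
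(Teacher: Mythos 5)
Your proof is correct, and its skeleton coincides with the paper's: identify $A$ with $B_{0,K}/\ker(\varphi)$, descend the kernel to a subgroup scheme $G_0\subseteq B_0$, and use that the quotient $B_0/G_0$ commutes with base change. The difference lies in how the two cases are discharged. The paper simply cites Conrad for both descent statements: \cite[Lemma 3.11]{conrad2006} for finite \emph{\'etale} subgroup schemes (the separable case) and \cite[Theorem 2.3]{conrad2006} for \emph{arbitrary} finite subgroup schemes of a one-dimensional abelian variety (the $\dim(A)=1$ case), so in the paper the possibly infinitesimal part of the kernel is descended directly. You instead (a) reprove the \'etale descent by hand, via the fact that torsion points of $B_{0,K}$ are already defined over $K_0$, and (b) in the one-dimensional case you avoid descending any infinitesimal group scheme at all, by factoring $\varphi=\varphi_s\circ F^n$ with $F^n$ the relative Frobenius and $\varphi_s$ separable, noting that the Frobenius twist $B_{0,K}^{(p^n)}=(B_0^{(p^n)})_K$ descends for free, and then invoking your separable case. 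This second point is a genuinely different mechanism, special to curves in a different way than the paper's citation: it makes transparent exactly why the hypothesis $\dim(A)=1$ is needed (in higher dimension an inseparable isogeny need not factor through a Frobenius power, e.g.\ the quotient by a single $\alpha_p$ when $g\ge 2$), and it keeps the proof self-contained, at the cost of being longer than the paper's three-line citation-based argument. One small point to tighten: in your $p$-power torsion step, injectivity of $B_0[p^m](K)\to E(K)$ into the constant \'etale quotient $E$ shows the image is defined over $K_0$, but not yet that the point of $B_0[p^m]$ itself is; either invoke the splitting of the connected--\'etale sequence over the perfect field $K_0$, or, more simply, use that a finite scheme over an algebraically closed field $K_0$ acquires no new points under any field extension (every residue field of such a scheme equals $K_0$), which in fact handles all torsion at once without separating the prime-to-$p$ and $p$-power parts.
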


\begin{proof} \par\medskip
 The kernel $N:={\rm ker}(\varphi)$ is a finite subgroup
 scheme of $B_{0,K}$. If $\dim(A)=1$, then $\dim(B_0)=1$ and thus there exists a
 finite subgroup scheme $N_0/K_0$ of $B_0$ such that $N=N_{0,K}$,
 see \cite[Theorem 2.3]{conrad2006}. If $\varphi$ is
 separable, then $N/K$ is a finite {\em \'etale} subgroup scheme of $B_{0,K}$ 
 and thus there exists a
 finite subgroup scheme $N_0/K_0$ of $B_0$ such that $N=N_{0,K}$,
 see \cite[Lemma 3.11]{conrad2006}. In both cases it follows that 
$A_0:=B_0/N_0$ is an abelian variety over $K_0$
(see \cite[3.7]{conrad2006} for the existence of the quotient)
and 
$A_{0,K}=B_{0,K}/N_{0,K}=B_{0,K}/N\cong A$, as desired. 
\end{proof}

\begin{Lemma}\label{piisogeny}
Let $A/K$ be an abelian variety. Assume that 
there exist an abelian variety $B_0/K_0$ and a surjective homomorphism
$B_{0,K}\to A$.
Then there exists an abelian variety 
$A_0/K_0$ and a purely inseparable isogeny  $A\to A_{0,K}$.
\end{Lemma}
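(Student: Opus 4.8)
The plan is to turn the surjection into an isogeny that \emph{comes from} $K_0$ and then to isolate its inseparable part by means of Frobenius. Write $\psi\colon B_{0,K}\to A$ for the given surjection, and note that $K$, being algebraically closed, is perfect. We may assume $\mathrm{char}(K)=p>0$, since in characteristic $0$ the isogeny $\bar\psi$ produced below is automatically separable and Lemma \ref{isogeny} already yields $A\cong A_{0,K}$.

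First I would isolate the connected part of the kernel. As $K$ is perfect, $N_{\mathrm{red}}$ is a smooth subgroup scheme of $N:=\mathrm{ker}(\psi)$, and $H:=(N_{\mathrm{red}})^{0}$ is an abelian subvariety of $B_{0,K}$ with $N/H$ finite. The key claim is that $H$ already descends to $K_0$. For this I would use that homomorphisms of abelian varieties do not change under extension of algebraically closed fields, so that $\mathrm{End}^{0}(B_{0,K})=\mathrm{End}^{0}(B_{0})$; choosing a polarization, Poincaré complete reducibility provides an idempotent $e\in\mathrm{End}^{0}(B_{0,K})$ with image $H$, and since $e$ lies in $\mathrm{End}^{0}(B_{0})$, clearing denominators gives an endomorphism of $B_0$ whose image $H_0\subseteq B_0$ satisfies $H_{0,K}=H$.

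With $C_0:=B_0/H_0$, the surjection $\psi$ kills $H$ and hence factors through $(B_0/H_0)_K=C_{0,K}$, inducing an isogeny $\bar\psi\colon C_{0,K}\to A$ whose kernel $N/H$ is finite. Now I would separate the separable and inseparable parts of $\bar\psi$. Since $K$ is perfect, the connected-étale sequence of the finite group scheme $\mathrm{ker}(\bar\psi)$ splits as $N^{\mathrm{et}}\times N^{0}$. The étale factor $N^{\mathrm{et}}$ is a constant group scheme whose points are torsion points of $C_{0,K}$ and hence, $K_0$ being algebraically closed, already defined over $K_0$; thus $N^{\mathrm{et}}$ descends to a finite subgroup $N_0^{\mathrm{et}}\subseteq C_0$, and $C_2:=C_{0,K}/N^{\mathrm{et}}=(C_0/N_0^{\mathrm{et}})_K$ is defined over $K_0$. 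The residual isogeny $C_2\to A$ has kernel isomorphic to the infinitesimal group $N^{0}$, so it is purely inseparable. Finally I would reverse it by Frobenius: its kernel, being infinitesimal, is contained in $\mathrm{ker}(F^{n})$ for the $n$-fold relative Frobenius $F^{n}\colon C_2\to C_2^{(p^{n})}$ once $n$ is large, so $F^{n}$ factors as $g\circ(C_2\to A)$ for a necessarily purely inseparable isogeny $g\colon A\to C_2^{(p^{n})}$. As $C_2$ descends to $K_0$ and relative Frobenius commutes with base change, $C_2^{(p^{n})}=(A_0)_K$ with $A_0:=(C_0/N_0^{\mathrm{et}})^{(p^{n})}$ an abelian variety over $K_0$, and $g$ is the desired purely inseparable isogeny $A\to A_{0,K}$.

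The step I expect to be the main obstacle is the descent of $H$: the whole argument rests on abelian subvarieties of $B_{0,K}$ being defined over $K_0$, which in turn relies on the invariance of endomorphism algebras under extension of algebraically closed fields. The remaining delicacy is organizational, namely making sure that it is exactly the étale part of $\mathrm{ker}(\bar\psi)$ that descends while the infinitesimal part is precisely what Frobenius removes; this parallels the separable situation already treated in Lemma \ref{isogeny}.
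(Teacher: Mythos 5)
Your proof is correct, but it takes a genuinely different route from the paper's in its key step. The opening move is essentially the same: both arguments reduce the surjection to an isogeny from a $K_0$-descended abelian variety using Poincar\'e reducibility together with the fact that abelian subvarieties of $B_{0,K}$ descend to $B_0$ --- the paper simply cites \cite[3.21]{conrad2006} for this and descends an abelian subvariety isogenous to $A$, whereas you descend the reduced identity component $H$ of the kernel (re-proving the descent fact via invariance of $\mathrm{End}^0$ under extension of algebraically closed fields, which is fine, though you could just cite it) and quotient by it. The divergence comes after that. The paper first \emph{reverses} the isogeny $\varphi\colon B_{0,K}\to A$ by the multiplication-by-$n$ trick (choosing $n$ with $\ker\varphi\subseteq\ker[n]_B$, producing $f\colon A\to (B_0/N_0)_K$ with descended target), then splits $f$ as a purely inseparable isogeny $f_i\colon A\to A'$ followed by a separable one, and finally descends $A'$ by exhibiting a \emph{separable} isogeny from the descended variety $(B_0/(N_0)^\circ)_K$ onto $A'$ and invoking the separable case of Lemma \ref{isogeny}. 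You instead split \emph{before} reversing: using the connected-\'etale decomposition of $\ker\bar\psi$ over the perfect field $K$, you quotient $C_{0,K}$ by the \'etale part --- which descends because its points are torsion and hence already $K_0$-rational --- obtaining a purely inseparable isogeny $C_2\to A$ with descended source, and you then reverse this isogeny by factoring a large power of the relative Frobenius through it, the target $C_2^{(p^n)}$ descending because Frobenius twists commute with base change. Both routes ultimately rest on the same two descent facts (abelian subvarieties and finite \'etale subgroups descend across extensions of algebraically closed fields), but your Frobenius reversal replaces the paper's $[n]$-trick and its appeal to Lemma \ref{isogeny}: this makes your argument somewhat more self-contained in positive characteristic, at the price of invoking the connected-\'etale splitting and the Frobenius-twist formalism, whereas the paper's version recycles its previously established lemma and needs no Frobenius twists at all.
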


\begin{proof}
By the Poincare reducibility theorem \cite[12.1]{milne1986b} there exists an abelian subvariety $B'$ of $B_{0,K}$ such that $A$ is $K$-isogenous to $B'$. 
By \cite[3.21]{conrad2006} there exits an abelian subvariety $B'_0$ of $B_0$ such that $B'\cong B'_{0,K}$. After replacing $B_0$ by $B_0'$ we can assume that there exists an isogeny $\varphi: B_{0,K}\to A$. We put $B:=B_{0,K}$. 

There exists $n\in \Nn$ such that $\ker(\varphi)$ is a subgroup scheme of $\mathrm{ker}([n]_B)$. We define $N_0:=\mathrm{ker}([n]_{B_0})$ and $N:=N_{0,K}$. The projection $p: B\to B/N$ 
induces an isogeny $f: A\to B/N$ such that $f\circ \varphi=p$ (see \cite[3.8]{conrad2006}). 
We denote by 
${\rm ker}(f)^\circ$ the connected component of the unit element of ${\rm ker}(f)$
and by
$f_i: A\to A':=A/\ker(f)^\circ$ the canonical projection. %is purely inseparable (because its kernel is connected)
Then $f$ factors through $A'$, i.e. there exists an isogeny $f_s: A'\to B/N$ such 
that $f_s\circ f_i=f$ (see \cite[3.7]{conrad2006}). We now have constructed isogenies 
$$
 B\buildrel \varphi\over \longrightarrow A
\buildrel f_i \over \longrightarrow A' 
\buildrel f_s \over \longrightarrow B/N
$$
such that $f_s\circ f_i\circ \varphi = p$. The restriction $f_i\circ \varphi |N^\circ \to \ker(f_s)$ is
trivial because $N^\circ$ is connected and $\ker(f_s)\cong \ker(f)/\ker(f)^\circ$ is \'etale (see \cite[3.10]{conrad2006}). It follows that $\ker(f_i\circ \varphi)\supset N^\circ$ and thus 
$f_i\circ \varphi$ factors through an isogeny $g: B/N^\circ\to A'$ by \cite[3.7]{conrad2006}. The kernel of 
$g$ is a closed subgroup scheme of the finite \'etale group scheme $N/N^\circ$, hence $\ker(g)$
is \'etale and $g: B/N^\circ \to A'$ is a {\em separable} isogeny. But $B/N^\circ= (B_0/(N_0)^\circ)_K$ and thus Lemma \ref{isogeny}
implies that there exists an abelian variety $A_0/K_0$ such that $A'\cong A_{0,K}$. 
\end{proof}

\begin{Lemma}\label{isotrivial}
Let $\varphi\colon D\dashrightarrow C$ be a dominant rational map of $K$-curves.
If $D$ is $K/K_0$-isotrivial, then $C$ is $K/K_0$-isotrivial.
\end{Lemma}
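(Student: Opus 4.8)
```latex
\begin{proof}[Proof plan for Lemma \ref{isotrivial}]
The plan is to reduce everything to the smooth projective setting, where
$K/K_0$-isotriviality means descent to $K_0$, and then transport the
descent datum along the given rational map. First I would pass to smooth
projective models: let $D'$ and $C'$ be smooth projective models of $D$
and $C$. Since a dominant rational map of curves induces a (dominant,
hence surjective) morphism of the smooth projective models, $\varphi$
induces a finite surjective morphism $\varphi'\colon D'\to C'$. The
hypothesis that $D$ is $K/K_0$-isotrivial gives, as noted in the text
before Lemma \ref{isogeny}, a $K_0$-curve $D_0$ with a \emph{$K$-isomorphism}
$D'\cong D_{0,K}$; the goal is to produce a $K_0$-curve $C_0$ together
with a $K$-isomorphism $C'\cong C_{0,K}$, which is exactly what
$K/K_0$-isotriviality of $C$ amounts to.

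Next I would exploit that a smooth projective curve is determined, up to the
finitely many exceptional behaviours, by its function field, and that
isotriviality can be detected on function fields: $D'$ descends to $K_0$
precisely when the function field $K(D')$ is generated over $K$ by
$K_0(D_0)=K(D')^{K_0\text{-descent}}$, i.e.\ $K(D')=K\otimes_{K_0}K_0(D_0)$
in a suitable sense. The dominant map $\varphi'$ corresponds to an
inclusion of function fields $K(C')\hookrightarrow K(D')$. The heart of the
argument is then a Galois-descent / specialization observation: because
$K(D')$ already descends to $K_0$ and $K/K_0$ is an extension of
algebraically closed fields, the subfield $K(C')$ is likewise defined over
$K_0$, i.e.\ $K(C')=K\cdot\bigl(K(C')\cap \overline{K_0(D_0)}\bigr)$. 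One
clean way to package this is via moduli/field of moduli: a smooth
projective curve over $K$ is isotrivial iff its point in the coarse moduli
space $M_g$ lies in $M_g(K_0)$; and since $[D']\in M_{g(D')}(K_0)$ while the
map $\varphi'$ is itself rigidly determined, the image curve has moduli
point in $M_{g(C')}(K_0)$ as well.

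I expect the main obstacle to be handling positive characteristic and
inseparability carefully, so that the descent of the function field really
does descend the \emph{curve} (not merely up to purely inseparable
twist). In characteristic $0$ one may simply spread out $D'$, $C'$, and
$\varphi'$ over a finitely generated $K_0$-subalgebra $R\subseteq K$ with
$\Spec R$ a variety over $K_0$, use that $D'$ is constant (isotrivial) to
trivialize the family of $D$'s, and then argue that the induced family of
image curves $C'$ must also be constant because it is pinned down by the
constant source together with the rigidity of finite maps; specializing at
a $K_0$-rational point of $\Spec R$ yields $C_0/K_0$ with $C_{0,K}\cong C'$.
In positive characteristic the subtlety is that a dominant map of curves can
factor through Frobenius, so the cleanest route is to separate the
separable and inseparable parts of $\varphi'$ and treat the (purely
inseparable) Frobenius factors directly --- Frobenius twists of an
isotrivial curve are again isotrivial since $K_0$ is perfect. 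Assembling
these pieces gives a $K$-isomorphism $C'\cong C_{0,K}$ for a suitable
$K_0$-curve $C_0$, and hence a $K$-birational map $C\dashrightarrow
C_{0,K}$, which is precisely the assertion that $C$ is $K/K_0$-isotrivial.
\end{proof}
```
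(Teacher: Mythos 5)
Your reduction to smooth projective models is fine, and your remark that Frobenius factors are harmless (since $K_0$ is perfect) correctly isolates one positive-characteristic issue, but the core of your argument has two genuine gaps. First, your ``Galois-descent / specialization observation'' --- that since $K(D')$ descends to $K_0$, the subfield $K(C')\subseteq K(D')$ is ``likewise defined over $K_0$'' --- is not an observation: it is precisely the statement of the lemma, rephrased in terms of function fields, so the heart of your proof is circular as written. Second, the principle you invoke to justify it, ``rigidity of finite maps'' (so that the moduli point of the image curve lies in $M_{g(C')}(K_0)$), is false as stated: non-constant maps of smooth projective curves are not rigid in general --- maps to $\Pp^1$ move in linear systems, and any finite map to an elliptic curve can be composed with an arbitrary translation. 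What your spreading-out argument over $\Spec R$ actually needs splits by the genus of $C'$. For $g(C')\ge 2$ you need the de Franchis--Severi finiteness theorem (only finitely many non-constant separable maps from a fixed $D'$ to curves of genus $\ge 2$, up to isomorphism of the target) plus a constructibility argument to conclude that the family of image curves is generically constant. For $g(C')=1$ finiteness genuinely fails: an elliptic quotient $E$ of $J_{D'}$ yields maps from $D'$ to the infinitely many pairwise non-isomorphic curves $E/C_n$, so you would instead need rigidity of abelian subvarieties of $J_{D_0,K}$ together with descent of finite subgroup schemes to $K_0$ --- and in positive characteristic these subgroup schemes can be infinitesimal, which your Frobenius remark does not cover. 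None of these ingredients appear in your proposal, and the genus-$1$ case is not even separated out.

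For comparison, the paper avoids rigidity altogether and argues by the genus of $C$: genus $0$ is trivial; for genus $1$ it passes to Jacobians and proves honest isogeny-descent statements (Lemmas \ref{isogeny} and \ref{piisogeny}, resting on Poincar\'e reducibility and Conrad's descent results for subgroup schemes --- exactly the point your sketch elides); and for genus $\ge 2$ it uses a point-counting argument: after descending $C$, $D$, $\varphi$ and the trivialization of $D$ to a finitely generated transcendental extension $F/K_0$, the isotriviality of $D$ supplies infinitely many $F$-points of $C$, and the Grauert--Manin theorem (Mordell over function fields) then forces $C$ to be isotrivial. Your de Franchis route for $g(C')\ge 2$ could in principle be completed into a genuinely different proof, but you must actually supply the finiteness theorem, the constructibility argument, and a separate treatment of elliptic targets; as it stands the proposal asserts its conclusions rather than proving them.
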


\begin{proof}
Since $K$ and $K_0$ are algebraically closed it suffices to consider the case that $K/K_0$ is transcendental. 
Without loss of generality we can assume that $C$ and $D$ are smooth projective
and that $\varphi:D\rightarrow C$ is a non-constant morphism.
Let $D_0$ be a smooth projective $K_0$-curve with 
an isomorphism $\psi:D_{0,K}\rightarrow D$.

If $g(C)=0$, then $C$ is isomorphic to $\Pp^1_{K}$, hence $C$ is $K/K_0$-isotrivial. 

If $g(C)=1$, then $C$ is an elliptic curve. By Albanese functoriality there is
a surjective homomorphism of abelian varieties 
$(J_{{D}_0})_K\cong J_{D}\to C$, 
and therefore
Lemma \ref{piisogeny} implies that there exists an elliptic curve $C_0/K_0$ such that $C$ is $K$-isogenous to
$C_{0,K}$.
Lemma \ref{isogeny} then shows that $C$ is $K/K_0$-isotrivial.

We may thus assume that $g(C)\ge 2$. 
Let $K_0\subseteq F\subseteq K$ be an intermediate field 
such that
$F/K_0$ is finitely generated and transcendental,
and $C$, $D$, $\varphi$ and $\psi$ descend to $F$,
i.e.~there exist smooth projective $F$-curves $C_1$ and $D_1$
with $C_{1,K}=C$ and $D_{1,K}=D$,
a morphism $\varphi_1:D_1\rightarrow C_1$ such that
$\varphi_{1,K}$ corresponds to $\varphi$, 
and an isomorphism $D_{0,F}\cong D_1$.
Since $K_0$ is algebraically closed,
$|D_1(F)|=|{D}_0(F)|\geq|{D}_0(K_0)|=\infty$.
Since $\varphi_1: {D}_1(F)\rightarrow C_1(F)$ has finite fibres,
this implies that $|C_1(F)|=\infty$.
Thus, since 
$C_1$ is of genus at least $2$,
%Proposition \ref{roes} 
the 
Grauert-Manin theorem (see \cite{samuelbour} or \cite[Prop. 11.7.1.]{friedjarden})
implies that
$C_1$ is $\tilde{F}/K_0$-isotrivial,
and hence $C$ is $K/K_0$-isotrivial.
\end{proof}

\begin{Lemma} \label{isodown} Let $K_1$ be an algebraically closed intermediate
field of $K/K_0$. Let $C$ be a $K_1$-curve. If $C$ is $K/K_0$-isotrivial, then $C$ is $K_1/K_0$-isotrivial. 
\end{Lemma}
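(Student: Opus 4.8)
The plan is to reduce to smooth projective curves and then descend an isomorphism from $K$ down to $K_1$, the latter step being where algebraic closedness of $K_1$ enters. Since isotriviality is a birational notion, I may replace $C$ by its smooth projective model over $K_1$ and so assume that $C$ is a smooth projective $K_1$-curve; then $C_K$ is the smooth projective model of $C_{K}$ over $K$. Because $C$ is $K/K_0$-isotrivial, the characterization recorded above for smooth projective curves furnishes a $K_0$-variety $W_0$ and a $K$-isomorphism $C_K \xrightarrow{\sim} W_{0,K}$; as $C_K$ is smooth projective of dimension $1$, the same holds for $W_{0,K}$ and hence, by descent of smoothness and properness, for $W_0$, so $W_0$ is a smooth projective $K_0$-curve.

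It then suffices to produce a $K_1$-isomorphism $C \xrightarrow{\sim} W_{0,K_1}$, for this exhibits $C$ as descending to $K_0$ and thus as $K_1/K_0$-isotrivial. The isomorphism $C_K \to W_{0,K}$ is defined over some field $L$ with $K_1 \subseteq L \subseteq K$ and $L$ finitely generated over $K_1$; writing $L$ inside the function field of an integral $K_1$-variety $T$, I spread out the isomorphism together with its inverse to an isomorphism $C \times_{K_1} U \xrightarrow{\sim} W_{0,K_1} \times_{K_1} U$ of $U$-schemes over a nonempty open subvariety $U \subseteq T$. Since $K_1$ is algebraically closed and $U$ is a nonempty $K_1$-variety, there is a point $u \in U(K_1)$, and the fibre of this isomorphism over $u$ is the desired $K_1$-isomorphism $C \xrightarrow{\sim} W_{0,K_1}$.

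The only real content is this last spreading-out-and-specializing step, namely the standard fact that over an algebraically closed field two varieties which become isomorphic over an extension field are already isomorphic over the base; everything else is bookkeeping about smooth projective models and the definition of isotriviality. I expect the mild technical points to be (i) checking that the specialized morphism really is an isomorphism, which is handled by spreading out both the map and its inverse and shrinking $U$, and (ii) confirming that $W_0$ is genuinely a $K_0$-curve, which is automatic since $K_0$ is algebraically closed and $W_{0,K}\cong C_K$ is integral, so that no twisting or inseparability phenomenon can interfere.
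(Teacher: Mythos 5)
Your proposal is correct and follows essentially the same route as the paper's own proof: reduce to a smooth projective model, use the isomorphism characterization of isotriviality for smooth projective curves, descend the $K$-isomorphism to a finitely generated extension $F/K_1$, spread it out (and its inverse) over a nonempty open subscheme of a $K_1$-variety with function field $F$, and specialize at a $K_1$-rational point, which exists because $K_1$ is algebraically closed. The paper cites EGA IV (8.8.2, 8.10.5) for the spreading-out step, which is exactly the standard fact you invoke.
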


\begin{proof} We can assume that $C$
is smooth and projective. As $C$ is $K/K_0$-isotrivial, there exists a 
$K_0$-curve $C_0$ and a $K$-isomorphism $C_{K}\to C_{0,K}$.
Therefore there exists a finitely generated extension $F/K_1$ inside $K$ and an $F$-isomorphism $f: C_F\to C_{0,F}$. 
Choose a $K_1$-variety $S$ with function field $F$. By \cite[8.8.2, 8.10.5]{EGAIV3} there exists a non-empty open subscheme $S'$ of $S$ such that $f$ extends to an $S$-isomorphism 
$C\times_{K_1} S'\to C_{0, K_1} \times_{K_1} S'$. As $K_1$ is algebraically closed there exists $s\in S'(K_1)$, and specializing at $s$ gives a $K_1$-isomorphism $C\to C_{0,K_1}$. Hence $C$ is $K_1/K_0$-isotrivial.
\end{proof}

Let $A/K$ be an abelian variety. A subvariety $X\subseteq A$ is 
{\bf $K/K_0$-special} 
if there exist 
\begin{enumerate}
 \item[\textbullet] an abelian subvariety $A'$ of $A$, 
  \item[\textbullet] an abelian variety $B_0/K_0$, 
  \item[\textbullet] a $K_0$-subvariety $X_0\subseteq B_0$, 
  \item[\textbullet] a surjective $K$-homomorphism $h\colon A'\to B_{0,K}$,
  \item[\textbullet]  and an element $g\in A(K)$ 
\end{enumerate}
such that $X=g+h^{-1}(X_{0, K})$. 
We write $t_g$ for the translation by $g$ on $A$.

The situation is as follows:
\begin{diagram}[height=0.8cm,width=1.6cm]
A & \lInto & A' & \rTo^h & B_{0,K} \\
\uInto && \uInto && \uInto \\
X & \lTo^{t_g} & h^{-1}(X_{0,K}) & \rTo^h & X_{0,K}\\
\end{diagram}

\begin{Proposition}\label{special}
Let $A/K$ be an abelian variety and $C\subseteq A$ a $K$-subcurve.
Then the following statements are equivalent.
\begin{enumerate}
 \item $C$ is a $K/K_0$-special subvariety of $A$.
 \item $C$ is of absolute genus $1$ or $K/K_0$-isotrivial.
\end{enumerate}
\end{Proposition}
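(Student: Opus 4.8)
The plan is to prove the two implications separately, reducing in each case to the translate $C':=C-g$, which has the same absolute genus and the same $K/K_0$-isotriviality behaviour as $C$ because $t_g$ is a $K$-isomorphism of $A$. For $(1)\Rightarrow(2)$ I would start from $C'=h^{-1}(X_{0,K})$ with $h\colon A'\to B_{0,K}$ a surjective homomorphism and $X_0\subseteq B_0$ a $K_0$-subvariety. Since $h$ is faithfully flat with all fibres of dimension $d:=\dim A'-\dim B_0$, restricting to $C'$ gives $1=\dim C'=\dim X_0+d$, so either $\dim X_0=0$ or $\dim X_0=1$. In the first case $X_{0,K}$ is a single $K$-point, and $C'$, being geometrically integral, is one coset of $(\ker h)^\circ_{\mathrm{red}}$; the latter is a one-dimensional abelian subvariety of $A'$, i.e.\ an elliptic curve, so $C'$ (and hence $C$) has absolute genus $1$ and alternative (2) holds.

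The remaining case $\dim X_0=1$, in which $h$ is an isogeny and $X_0$ a $K_0$-curve, is the main obstacle: here I must show $C'$ is isotrivial, but Lemma \ref{isotrivial} transports isotriviality only \emph{along} a dominant map, whereas $h|_{C'}\colon C'\to X_{0,K}$ points from $C'$ \emph{towards} the descended, hence isotrivial, curve $X_{0,K}$. To reverse the direction I would pick a complementary isogeny $\hat h\colon B_{0,K}\to A'$ with $\hat h\circ h=[n]_{A'}$ and $h\circ\hat h=[n]_{B_{0,K}}$, and examine the reduced preimage $\hat h^{-1}(C')\subseteq B_{0,K}$. Since $h$ is surjective we have $h(C')=X_{0,K}$, so every point $b\in\hat h^{-1}(C')$ satisfies $[n](b)=h(\hat h(b))\in X_{0,K}$; thus $\hat h^{-1}(C')\subseteq [n]_{B_{0,K}}^{-1}(X_{0,K})=\bigl([n]_{B_0}^{-1}(X_0)\bigr)_K$, a curve defined over $K_0$. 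Because $K_0$ is algebraically closed, the irreducible components of this descended curve are base changes of $K_0$-curves, hence isotrivial, and each component of $\hat h^{-1}(C')$, being one-dimensional and contained in it, must coincide with one of them. Finally $\hat h$ carries $\hat h^{-1}(C')$ onto $C'$, so some isotrivial component dominates $C'$, and Lemma \ref{isotrivial} yields that $C'$, and therefore $C$, is $K/K_0$-isotrivial.

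For $(2)\Rightarrow(1)$ I would treat the two alternatives. If $C$ has absolute genus $1$, then the nonconstant map from its smooth projective model $\bar C$ into $A$ becomes, after a translation, the Abel--Jacobi isomorphism $\bar C\xrightarrow{\sim}J_{\bar C}$ followed by a homomorphism into $A$; its image $C$ is therefore a translate $a+A'$ of a one-dimensional abelian subvariety $A'\subseteq A$, and this is special with $B_0=0$ (the zero abelian variety), $X_0=\{0\}$ and $h=0$, since then $h^{-1}(X_{0,K})=A'$. If instead $C$ is isotrivial, its smooth model descends as $\bar C\cong C_{0,K}$ with $C_0/K_0$, so $J_{\bar C}\cong(J_{C_0})_K$; choosing a base point in $C_0(K_0)$ makes the Abel--Jacobi map descend, and after translating $A$ the embedded curve is $C-a=u(W_{0,K})$ for a $K_0$-curve $W_0\subseteq J_{C_0}$ and a surjective homomorphism $u\colon(J_{C_0})_K\twoheadrightarrow A':=\mathrm{im}(u)$. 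Applying Lemma \ref{piisogeny} to $u$ produces $A_0'/K_0$ and a purely inseparable isogeny $\pi\colon A'\to(A_0')_K$. The composite $\pi\circ u$ descends to $K_0$ by rigidity of homomorphisms of abelian varieties under extensions of algebraically closed fields, so $\pi(C-a)$ descends to a $K_0$-curve $X_0\subseteq A_0'$; and since $\pi$ is purely inseparable, hence bijective on $\tilde{K}$-points, the reduced preimage $\pi^{-1}(X_{0,K})$ equals $C-a$. Thus $C=a+h^{-1}(X_{0,K})$ with $h:=\pi$ and $B_0:=A_0'$ exhibits $C$ as special, completing the equivalence. Beyond the dual-isogeny argument above, the only non-formal inputs are this rigidity of $\Hom$ and the bijectivity of purely inseparable isogenies on geometric points.
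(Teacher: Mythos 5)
Your proof is correct and takes essentially the same route as the paper's: the same case split on $\dim X_0$ with the kernel giving an elliptic curve, the same dual-isogeny trick combined with Lemma \ref{isotrivial} for $(1)\Rightarrow(2)$, and the same Jacobian/Lemma \ref{piisogeny}/rigidity argument exploiting that a purely inseparable isogeny is bijective on points for $(2)\Rightarrow(1)$. The only cosmetic difference is that in the isogeny case you pull $C'$ back along $\hat h$ and match components with those of $\bigl([n]_{B_0}^{-1}(X_0)\bigr)_K$, whereas the paper pushes a suitable component of $[n]_{B_0}^{-1}(X_0)$ forward onto $C$ via $\hat h$ --- the same idea either way.
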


\begin{proof}

{\sc Proof of $(1)\Rightarrow(2)$.}

If $C$ is special,
then there exist an abelian subvariety $A'\subseteq A$,
an abelian variety $B_0/K_0$,
a $K_0$-subvariety $C_0\subseteq B_0$,
a surjective $K$-homo\-mor\-phism $h\colon A'\rightarrow B_{0,K}$,
and an element $g\in A(K)$ such that
$C=g+h^{-1}(C_{0, K})$.
Without loss of generality assume that $0_{B_0}\in C_0$, $0_A\in C$,
and $g=0_A$.
Then $C\subseteq A'$, 
so assume without loss of generality that $A=A'$.
%Suppose that the absolute genus of $C$ is at least $2$.

If ${\rm dim}(C_0)=0$,
then $C={\rm ker}(h)$ is an elliptic curve,
hence of absolute genus $1$.

If ${\rm dim}(C_0)\geq1$,
then ${\rm dim}(C_0)={\rm dim}(C)=1$,
so $h$ has finite fibers over $C_{0,K}$,
and hence all fibers of $h$ are finite.
Consequently, 
$h$ is an isogeny.
Let $\hat{h}\colon B_{0,K}\rightarrow A$ be an isogeny
with $h\circ\hat{h}=[n]_{B_{0,K}}$ for some $n\in\mathbb{N}$,
and let $C_0'$ be an irreducible component of $[n]_{B_0}^{-1}(C_0)$ such
that $[n]_{B_0}(C_0')=C_0$. Now we have a non-constant morphism
$$
 C_{0,K}'\buildrel \hat{h}\over \longrightarrow C=h^{-1}(C_{0,K}) 
\buildrel h\over \longrightarrow C_{0,K}
$$
and Lemma \ref{isotrivial} implies that $C$ is $K/K_0$-isotrivial.

\vspace{0.2cm}

{\sc Proof of $(2)\Rightarrow(1)$.}

If $C$ is of absolute genus $1$, then 
the normalization $\lambda: \ol{C}\to C$ of $C$ is an elliptic curve. 
By   \cite[Cor. 2.2]{milne1986b} there exists 
$g\in A(K)$
and a homomorphism $\lambda': \ol{C}\to A$ of 
 abelian varieties such that $\lambda=t_g\circ \lambda'$. 
 It follows that $A':=\mathrm{im}(\lambda')$
 is an abelian subvariety of $A$ and $C=g+A'$. If $B_0$ is the zero abelian variety over $K_0$ and 
 $h: A'\to B_{0,K}$ the zero homomorphism, then $A'=h^{-1}(0_{B_{0,K}})$ and thus $C=g+h^{-1}(0_{B_{0,K}})$ is special.

From now on assume that $C$ is isotrivial. We have to prove that $C$ is special.
There exists a smooth projective $K_0$-curve $C_0$ and a finite birational
morphism $f: C_{0,K}\to C$. 
Let $P_0\in C_0(K_0)$ be a point and let $\alpha_0: C_0\to J_{C_0}$ be the 
corresponding Albanese map (sending $P_0$ to $0$). It is a closed immersion
because the genus of $C_0$ must be at least $1$ by \cite[3.8]{milne1986b}. 
By applying a translation on $A$ we can assume without loss of generality
that $f(P_0)=0_A.$
% and let $t_a: A\to A$ be the translation by $a$. 
%Then $t_a\circ f$ sends $P_0$ to $0_A$. 
By Albanese functoriality, the morphism $f$
induces a homomorphism $f'\colon J_{C_0, K}\rightarrow A$
with $f'\circ \alpha_{0,K}=f$. 
Thus $C$ lies on the abelian subvariety 
$A':=f'(J_{C_0,K})$, and $C=f'(\alpha_{0,K}(C_{0,K}))$.
%By the Poincare reducibility theorem
%\cite[12.1]{milne1986b} $A'$ is isogenuous to an abelian subvariety $J'$ of 
%$J_{C_0,K}$ and by \cite[20.4(a)]{milne1986b} there exists an abelian subvariety $J'_0/K_0$ of $J_{C_0}$ such that
%$J'= (J_0')_K$. 
Lemma \ref{piisogeny} gives
an abelian variety $B_0/K_0$ and
a purely inseparable isogeny $h\colon A'\rightarrow B_{0,K}$. Now consider the morphisms
$$
 C_{0,K}\buildrel \alpha_{0,K} \over \longrightarrow
J_{C_0, K}\buildrel f'\over \longrightarrow A'\buildrel h\over\longrightarrow B_{0,K}
$$
By
\cite[20.4(b)]{milne1986b} the homomorphism 
$h\circ f'$
is defined over $K_0$, that is, there exists a homomorphism $\beta_0: J_{C_0}\to B_0$ such that 
$h\circ f'=\beta_{0,K}$. Now $C_0':=\beta_0(\alpha_0(C_0))$ is a subvariety of $B_0$ such that 
$C_{0,K}'=h(f'(\alpha_{0,K}(C_{0,K})))$. 
As $h$ is a purely inseparable isogeny, the topological space underlying each fibre is a singleton. Hence $h: A'\to B_{0,K}$ is a homeomorphism on the underlying topological spaces of $A'$ and $B_{0,K}$.
It follows that 
$$
 h^{-1}(C_{0,K}')=(C_{0,K}'\times_{B_{0,K}} A')_{\mathrm{red}}= f'(\alpha_{0,K}(C_{0,K}))=C.
$$
Thus $C$ is special.
\end{proof}

\section{R\"ossler's theorem on inseparable points on curves}

\noindent
In this section we recall and slightly generalize a recent result of R\"ossler about 
inseparable points on curves,
which strengthens the aforementioned Grauert-Manin theorem.

\begin{Theorem} \label{roes} 
Let $F/F_0$ be a finitely generated extension of fields of positive characteristic, and let $C$ be an $F$-curve of absolute genus $g \ge 2$. If 
$C(F_{\rm ins})$ is infinite, then
$C$ is $\tilde{F}/\tilde{F}_0$-isotrivial.
\end{Theorem}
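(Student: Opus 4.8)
The plan is to reduce the assertion to the case of an algebraically closed field of constants, which is exactly the situation handled by R\"ossler's theorem \cite{roessler}, and to pass from $F_0$ to its algebraic closure by base change. At the outset I would dispose of the degenerate case: if $F/F_0$ is algebraic, then $\tilde F=\tilde F_0$, so $C_{\tilde F}$ is trivially defined over $\tilde F_0$ and $C$ is $\tilde F/\tilde F_0$-isotrivial with nothing to prove. Hence I may assume $F/F_0$ is transcendental. Since geometric integrality guarantees that $F(C)/F$ is separable, I may also replace $C$ by a smooth projective model over $F$; this changes neither the absolute genus nor the isotriviality class, and it alters $C(F_{\rm ins})$ only by finitely many points, because each closed point of the indeterminacy locus has residue field finite over $F$ and hence carries only finitely many $F_{\rm ins}$-points. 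Thus $C(F_{\rm ins})$ stays infinite.

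The main step is a base change in the constant field. Put $k:=\tilde F_0$, which is algebraically closed and contained in $\tilde F$, and let $E:=F\cdot k\subseteq\tilde F$ be the compositum. Then $E/k$ is finitely generated, transcendental, and regular (as $k$ is algebraically closed), so $E$ is a function field over the algebraically closed field of constants $k$, and one may take $\tilde E=\tilde F$. It follows that $C_E$ is a smooth projective geometrically integral $E$-curve with $(C_E)_{\tilde E}=C_{\tilde F}$, so $C_E$ has the same smooth projective model as $C$ and therefore absolute genus $g\ge 2$. The link between the hypotheses is the inclusion of inseparable closures $F_{\rm ins}\subseteq E_{\rm ins}$, which holds since $F\subseteq E$ gives $F^{1/p^n}\subseteq E^{1/p^n}$ for every $n$; here $E_{\rm ins}$ is the perfect field obtained as the maximal purely inseparable extension of $E$. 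This inclusion induces an injection $C(F_{\rm ins})\hookrightarrow C_E(E_{\rm ins})$, the injectivity coming from the separatedness of $C$ together with the surjectivity of $\Spec E_{\rm ins}\to\Spec F_{\rm ins}$, so $C_E(E_{\rm ins})$ is infinite.

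Finally I would invoke R\"ossler's theorem for the curve $C_E$ of absolute genus $\ge 2$ over the function field $E$ with algebraically closed field of constants $k$: as $C_E$ has infinitely many points over the perfect field $E_{\rm ins}$, it is $\tilde E/k$-isotrivial, i.e.\ $C_{\tilde F}=(C_E)_{\tilde E}$ descends up to birational equivalence to $\tilde F_0$, which is precisely the statement that $C$ is $\tilde F/\tilde F_0$-isotrivial. I expect the main obstacle to be the base-change bookkeeping rather than any deep new input: one must check carefully that absolute genus and geometric integrality survive the passage from $C/F$ to $C_E/E$ (this rests on the identity $\tilde E=\tilde F$) and that the comparison $F_{\rm ins}\subseteq E_{\rm ins}$ transports the infinitude of inseparable points into the exact form required by R\"ossler's result.
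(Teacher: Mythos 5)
Your reduction to an algebraically closed constant field is natural, but it misses the central difficulty: R\"ossler's theorem \cite[Thm.\ 1.1]{roessler}, in the form it is proved, requires not only that the constant field be algebraically closed but also that $\mathrm{trdeg}(F/F_0)=1$. Your compositum $E=F\cdot\tilde F_0$ indeed has algebraically closed constants, but $\mathrm{trdeg}(E/\tilde F_0)=\mathrm{trdeg}(F/F_0)$ can be arbitrary, so invoking R\"ossler for $C_E$ over $E$ appeals to a statement that is not available. This is exactly where the paper does genuinely more work: having reduced to $C$ smooth and projective, it considers the family $\mathcal{F}$ of all algebraically closed intermediate fields $F'$ of $\tilde F/\tilde F_0$ with $\mathrm{trdeg}(\tilde F/F')=1$, applies R\"ossler to each extension $F'F/F'$ (using that $C((F'F)_{\rm ins})\supseteq C(F_{\rm ins})$ is infinite) to conclude that $C$ is $\tilde F/F'$-isotrivial for every $F'\in\mathcal{F}$, and then descends: since $\tilde F_0=\bigcap_{F'\in\mathcal{F}}F'$, the moduli point $c\in M_g(\tilde F)$ of $C$ lies in $M_g(F')$ for all $F'$, hence in $M_g(\tilde F_0)$ (alternatively one can quote \cite[Corollary 3.2.2]{Koizumi}). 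Without some such intersection and field-of-moduli descent argument, your proof does not go through as soon as $\mathrm{trdeg}(F/F_0)\ge 2$.

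A second, smaller, gap is your first reduction: separability of $F(C)/F$ (which follows from geometric integrality) does \emph{not} give a smooth projective model over $F$ itself. In positive characteristic over a non-perfect field $F$, the regular projective model of $C$ need not be geometrically regular, hence need not be smooth, and its genus can even exceed the absolute genus. The correct fix (and the one in the paper) is to take the regular projective model over the perfect field $F_{\rm ins}$, which is automatically smooth, and descend it to a finite subextension $E/F$ inside $F_{\rm ins}$; since $\tilde E=\tilde F$, $E_{\rm ins}=F_{\rm ins}$, and $E/F_0$ is still finitely generated, one may then replace $F$ by $E$. This part of your argument is repairable along those lines; the transcendence-degree issue above is the essential missing idea.
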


In the following proof we deduce Theorem \ref{roes} from R\"ossler's \cite[Thm. 1.1]{roessler}, where this theorem is proven 
in the special case where $F_0$ is algebraically closed, 
$\mathrm{trdeg}(F/F_0)=1$ and $C$ is smooth and projective.

\begin{proof}
There exists a regular projective curve over $F_{\mathrm{ins}}$ that is 
birational to $C_{F_{\mathrm{ins}}}$. This curve is automatically geometrically regular because $F_{\mathrm{ins}}$ is perfect (cf. \cite[6.7.7]{EGAIV2}), hence smooth over $F_{\mathrm{ins}}$ by \cite[17.5.2]{EGAIV4}. It follows that there exists a finite extension $E/F$ inside $F_{\mathrm{ins}}$ and a smooth projective $E$-curve $\ol{C}$ that is birational to $C_E$. 
Thus 
$\ol{C}(E_{\rm ins})$ is infinite
and it suffice to prove that
$\ol{C}$ is $\tilde{F}/\tilde{F}_0$-isotrivial.
We can thus assume for the rest of the proof that $C$ is smooth and projective. 
 
Let $\mathcal{F}$ be the set of all
algebraically closed intermediate fields $F'$ of $\tilde{F}/\tilde{F}_0$ with $\mathrm{trdeg}(\tilde{F}/F')=1$. 
Then $\tilde{F}_0=\bigcap_{F'\in \mathcal{F}} F'$:
Indeed, every
$x\in \tilde{F}\setminus \tilde{F}_0$ is an element of a transcendence base $(x, x_2,\cdots, x_n)$ of $\tilde{F}/\tilde{F}_0$ and $x$ is not contained in the algebraic closure $F'$ of $F_0(x_2,\cdots, x_n)$.

For every $F'\in\mathcal{F}$,
$C((F'F)_{\rm ins})\supseteq C(F_{\rm ins})$ is infinite,
so \cite[Thm. 1.1]{roessler} gives that that
$C$ is $\tilde{F}/{F}'$-isotrivial.
Therefore, $C$ is in fact $\tilde{F}/\bigcap_{F'\in\mathcal{F}}F'$-isotrivial:
This can be seen by looking at the point $c\in M_g(\tilde{F})$
corresponding to $C$
on the coarse moduli space $M_g$ for smooth proper curves of genus $g$,
see \cite[5.6, 7.14]{mumfordgeo},
or explicitly by \cite[Corollary 3.2.2]{Koizumi}.
\end{proof}

\begin{Remark}\label{Rem:Kim}
We note that this theorem could also be deduced from 
the older \cite{Kim} instead of \cite{roessler}:
Let $F$ be a function field of one variable over a field $F_0$ of positive characteristic $p$.
Consider an affine 
plane curve 
$$
 C=\Spec(F[X, Y]/(f(X, Y)))
$$ 
where $f(X, Y)\in F[X, Y]$ is absolutely irreducible.
We denote by $f^{(n)}(X, Y)$ the polynomial obtained from $f(X, Y)$ by
raising the coefficients of $f(X, Y)$ to the $p^n$-th power and by
$$
 C^{(n)}:=\Spec(F[X, Y]/(f^{(n)}(X, Y)))
$$ 
the corresponding curve. For 
$n\ge 1$ there is the Frobenius injection
$$
 {\rm Fr}\colon C^{(n-1)}(F)\to C^{(n)}(F),\ (x, y)\mapsto (x^p, y^p)
$$
and we define 
$$
 C^{(n)}(F)_{\rm new}:=C^{(n)}(F)\setminus {\rm Fr}(C^{(n-1)}(F)).
$$
Finally, $C^{(0)}(F)_{\rm new}:=C(F)$.
Note that the Frobenius gives a bijection between $C^{(n)}(F)$ and $C(F^{p^{-n}})$,
and hence between $C^{(n)}(F)_{\rm new}$ and $C(F^{p^{-n}})\setminus C(F^{p^{-n+1}})$.
Then \cite[Theorem 1]{Kim} claims
that if the absolute genus of $C$ is at least $2$
and $C^{(n)}(F)_{\rm new}\neq \emptyset$ for infinitely many
$n$, then $C$ is $\tilde{F}/\tilde{\mathbb{F}}_p$-isotrivial.

This, however, is incorrect, as the following example shows:
Let $F_0$ be any non-perfect transcendental ample field (e.g.~$F_0=\mathbb{F}_p(t)_{\rm sep}$),
$F$ any function field of one variable over $F_0$ (e.g.~$F=F_0(X)$),
and $C$ any $F_0$-curve of absolute genus at least $2$ which is not $\tilde{F}_0/\tilde{\mathbb{F}}_p$-isotrivial
and has a smooth $F_0$-rational point
(see e.g.~Lemma \ref{nonisotrivial} below for the existence of such a curve).
Then \cite[Theorem 1]{Kim} would imply that 
there exists an $n$ such that
$C(F_{\rm ins})=C(F^{p^{-n}})$,
hence $C(F_{0, \rm ins})=C(F_{0, \rm ins})\cap C(F^{p^{-n}})=C(F_0^{p^{-n}})$,
contradicting for example \cite[Corollary 5]{subfields}.
\end{Remark}

\section{The Mordell-Lang Conjecture}
\label{sec_ml}

\noindent
The main ingredient in the proof of Theorem \ref{Main}
is a partial result towards the general Mordell-Lang conjecture
for function fields.

\begin{Conjecture}[General Mordell-Lang conjecture for function fields]
Let $K/K_0$ be an extension of algebraically closed fields,
$A/K$ an abelian variety, 
$X\subseteq A$ a subvariety,
and $\Lambda\subseteq A(K)$ a subgroup of finite rank
such that $\Lambda\cap X(K)$ is Zariski-dense in $X$.
Then $X$ is $K/K_0$-special.
\end{Conjecture}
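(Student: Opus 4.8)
The plan is to prove the conjecture by induction on $\dim(A)$, using the curve case — which is exactly Theorem \ref{GMthm} reinterpreted through Proposition \ref{special} — as the engine, and reducing the higher-dimensional situation to it along the lines of the Ghioca--Moosa dichotomy \cite{GM}. First I would carry out the standard reductions: replacing $X$ by an irreducible component on which $\Lambda$ remains dense, I may assume $X$ is a $K$-subvariety; translating by an element of $\Lambda\cap X(K)$, I may assume $0_A\in X$; and replacing $A$ by the abelian subvariety generated by $X-X$, I may assume $X$ generates $A$. Passing to the quotient $A/\mathrm{Stab}(X)^\circ$ by the identity component of the stabilizer — an operation under which both ``finite-rank subgroup with Zariski-dense intersection'' and ``$K/K_0$-special'' are preserved — I may further assume that $\mathrm{Stab}(X)$ is finite, so that either $X=A$ (which is special, with $A'=A$, $B_0=0$) or $\dim(X)<\dim(A)$. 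The cases $\dim(X)=0$ (a point is special with $A'=0$) and $\dim(X)=1$ (Theorem \ref{GMthm} forces $C$ to be elliptic or isotrivial, hence special by Proposition \ref{special}) are then settled outright.

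The inductive heart is a trace dichotomy. Let $(A_0,\tau)$ be the Chow trace of $A$ down to $K_0$, i.e.~the largest abelian variety $A_0/K_0$ admitting a $K$-homomorphism $\tau\colon A_{0,K}\to A$ of finite kernel. The dichotomy I would establish is that either $X$ is ``captured'' by $\tau$, in which case it descends up to a purely inseparable isogeny to a $K_0$-subvariety and is special by the $(2)\Rightarrow(1)$ argument of Proposition \ref{special} together with Lemma \ref{piisogeny}, or else the non-isotrivial part of $A$ confines $\Lambda\cap X(K)$ to finitely many proper special subvarieties. For $\dim(X)\ge 2$ I would then project along a quotient $\pi\colon A\to A'$ with $\dim(A')<\dim(A)$: the image $\pi(X)$ carries the Zariski-dense finite-rank group $\pi(\Lambda)$, so by induction $\pi(X)$ is special in $A'$, while the generic fibre of $\pi|_X$ lies in a translate of $\ker(\pi)$, where induction applies again. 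The remaining task is to glue the special structures on base and fibre into one on $X$; here the fact that the descent data of Proposition \ref{special} and Lemma \ref{piisogeny} involve only \emph{purely inseparable} isogenies is what keeps the two structures compatible across the fibration.

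The hard part will be exactly this higher-dimensional core, namely producing a genuine analogue of R\"ossler's theorem (Theorem \ref{roes}) above dimension one. The projection argument is not self-contained: controlling the fibres presupposes that a positive-dimensional subvariety whose inseparable-plus-finite-rank points are dense and which is \emph{not} already captured by the trace must be special, and this cannot be reduced to the curve case by projection alone, since a projection can destroy Zariski density in the fibres. In Hrushovski's model-theoretic treatment \cite{Hrushovski} this gap is bridged by the modularity of the relevant definable groups, together with a separate analysis of the prime-to-$p$ and $p$-primary (Manin--Mumford) torsion, which in positive characteristic is genuinely delicate. It is therefore fair to expect that a complete proof of the conjecture as stated requires either importing that machinery or proving a higher-dimensional inseparable-points theorem strengthening \cite{roessler}; this is precisely why the present work closes the dichotomy, and hence proves the statement unconditionally, only in the one-dimensional instance, where Theorem \ref{roes} already suffices.
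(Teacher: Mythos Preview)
The statement you were asked to prove is recorded in the paper as a \emph{Conjecture}, not a theorem; the paper does not supply a proof, and indeed explains immediately afterwards that only Hrushovski's weaker version (with the $\Lambda\otimes\Zz_{(p)}$ finitely generated hypothesis) is known in general, and that the paper itself establishes only the one-dimensional case (Proposition~\ref{GM}). So there is no ``paper's own proof'' to compare your attempt against.

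Your proposal is, correspondingly, not a proof either, and to your credit you say so explicitly: the final paragraph identifies the missing ingredient (a higher-dimensional analogue of Theorem~\ref{roes}, or alternatively the model-theoretic modularity input from \cite{Hrushovski}) and notes that this is exactly why the paper stops at curves. That diagnosis is accurate. The one-dimensional part of your outline matches the paper's actual content: Theorem~\ref{GMthm} combined with Proposition~\ref{special} gives the curve case, and the reduction to $K_0=\tilde{\Ff}_p$ in positive characteristic is harmless since $K/\tilde{\Ff}_p$-special implies $K/K_0$-special for any algebraically closed $K_0\supseteq\tilde{\Ff}_p$.

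Where your inductive sketch would genuinely break down is precisely where you flag it: after projecting $\pi\colon A\to A'$, the induction hypothesis tells you $\pi(X)$ is special, but to run induction on the fibres you would need a finite-rank subgroup of the fibre abelian variety whose intersection with the fibre of $X$ is Zariski-dense, and $\Lambda\cap\pi^{-1}(\text{point})$ has no reason to be either finite-rank in the fibre or dense in $X\cap\pi^{-1}(\text{point})$. The ``gluing'' step you allude to is also not a formality: specialness of base and generic fibre does not assemble into specialness of $X$ without further control, and the purely inseparable isogenies from Lemma~\ref{piisogeny} do not by themselves furnish that control. These are not oversights on your part---they are the substance of why the conjecture remains open---but it is worth being clear that the outline is a plausible \emph{strategy}, not a proof with an isolated gap.
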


A slightly weaker result was proven by Hrushovski in \cite{Hrushovski},
where the condition that $\Lambda$ has finite rank is replaced
by the stronger condition that $\Lambda\otimes\mathbb{Z}_{(p)}$
is a finitely generated $\mathbb{Z}_{(p)}$-module,
where $p$ is the characteristic of $K$.

In \cite{GM}, Ghioca and Moosa use work of Scanlon and the theory of generic difference fields
to reduce the general Mordell-Lang conjecture
to the special case where $\Lambda\subseteq A(F_{\rm ins})$
for some finitely generated field $F$.
This reduction also works if on both sides one adds the condition
that $X$ is a curve.
The following result is a slight generalization of \cite[Corollary 2.6]{GM}:

\begin{Proposition}\label{GM}
Let $K$ be an algebraically closed field of positive characteristic $p$,
and let $K_0=\tilde{\mathbb{F}}_p$.
Let $A/K$ be an abelian variety,
$C$ a subcurve of $A$,
and $\Lambda\subseteq A(K)$ a subgroup of finite rank. 
If $\Lambda\cap C(K)$ is infinite, 
then the absolute genus of $C$ is $1$ or $C$ is $K/K_0$-isotrivial,
i.e.~$C$ is $K/K_0$-special.
\end{Proposition}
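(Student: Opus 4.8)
The plan is to combine the reduction of Ghioca and Moosa with R\"ossler's theorem (Theorem \ref{roes}). Since $C$ is a $K$-curve and hence irreducible, the hypothesis that $\Lambda\cap C(K)$ is infinite is equivalent to $\Lambda\cap C(K)$ being Zariski-dense in $C$, so the assertion is exactly the curve case of the general Mordell-Lang conjecture with $X=C$. First I would invoke the reduction of \cite[Corollary 2.6]{GM}, in the form adapted to curves indicated in the paragraph preceding the statement: to prove the proposition it suffices to treat the case in which there is a finitely generated field $F$ such that $A$ and $C$ descend to $F$ and $\Lambda\subseteq A(F_{\rm ins})$. Granting this, every point of $\Lambda\cap C(K)$ lies in $A(F_{\rm ins})\cap C(K)=C(F_{\rm ins})$, so $C(F_{\rm ins})$ is infinite.

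Next I would dispose of the low-genus cases. Because $C$ is a subcurve of an abelian variety, its absolute genus is at least $1$: otherwise the smooth projective model of $C_{\tilde K}$ would be $\mathbb{P}^1$, giving a non-constant morphism $\mathbb{P}^1\to A_{\tilde K}$, which is impossible. If the absolute genus of $C$ equals $1$, there is nothing more to prove, since this is one of the two alternatives in the conclusion.

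It remains to treat the case where the absolute genus of $C$ is at least $2$. Here I would apply Theorem \ref{roes} with $F_0=\mathbb{F}_p$: the extension $F/\mathbb{F}_p$ is finitely generated, $C$ has absolute genus $\ge 2$, and $C(F_{\rm ins})$ is infinite, so the theorem yields that $C$ is $\tilde F/\tilde{\mathbb{F}}_p$-isotrivial, i.e.\ $\tilde F/K_0$-isotrivial. Finally, since $F\subseteq K$ and $K$ is algebraically closed we have $\tilde F\subseteq K$, and isotriviality is preserved under base change along $\tilde F\subseteq K$: a $\tilde F$-birational map $C_{\tilde F}\dashrightarrow V_{0,\tilde F}$ with $V_0$ a $K_0$-variety base-changes to a $K$-birational map $C_K\dashrightarrow V_{0,K}$. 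Hence $C$ is $K/K_0$-isotrivial, the second alternative (and, by Proposition \ref{special}, the two alternatives together are precisely the assertion that $C$ is $K/K_0$-special).

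The genuinely substantive input is the reduction step: verifying that the Ghioca-Moosa argument, which rests on Scanlon's work and the model theory of generic difference fields, continues to hold when one restricts $X$ to be a curve, and that it produces the descent data ($A$ and $C$ over a finitely generated $F$ with $\Lambda\subseteq A(F_{\rm ins})$) in the form needed above. I expect this to be the main obstacle. By contrast, once one is in the $F_{\rm ins}$-situation the conclusion follows cleanly; the essential point is that the points of $\Lambda\cap C(K)$ are only known to be inseparable over $F$, so the classical Grauert-Manin theorem (which needs infinitely many $F$-rational points) does not apply and one genuinely needs R\"ossler's inseparable refinement in Theorem \ref{roes}.
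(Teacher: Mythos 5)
Your proposal is correct and takes essentially the same approach as the paper: reduce via the Ghioca--Moosa argument to the case $\Lambda\subseteq A(F_{\rm ins})$ for a finitely generated field of definition $F$, note that the absolute genus of $C$ is at least $1$, and then either conclude directly (genus $1$) or apply Theorem \ref{roes} with $F_0=\mathbb{F}_p$ (genus $\geq 2$), with Proposition \ref{special} identifying the two alternatives with speciality. The only discrepancy is bibliographic: the reduction you invoke is \cite[Theorem 2.2]{GM}, whereas \cite[Corollary 2.6]{GM} is the statement that Proposition \ref{GM} slightly generalizes.
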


\begin{proof}
First of all, $A$ and  $C$ are defined over a finitely generated 
subfield $F$ of $K$. 
By the reduction \cite[Theorem 2.2]{GM}, we may assume that 
$\Lambda\subseteq A(F_{\rm ins})$. 
This implies that $C(F_{\rm ins})$ is infinite. 

Since $C$ is a subcurve of an abelian variety,
the absolute genus of $C$ is at least $1$, \cite[3.8]{milne1986b}.
If the absolute genus of $C$ is $1$, then $C$ is $K/K_0$-special by Proposition \ref{special}.
If the absolute genus of $C$ is at least 2,
then $C$ is $K/K_0$-isotrivial by Theorem \ref{roes},
and hence $K/K_0$-special by Proposition \ref{special}.
\end{proof}

This proves Theorem \ref{GMthm}.

\begin{Remark}
Proposition \ref{GM} is stated in \cite[Corollary 2.6]{GM}
in the special case $A=J_C$.
The proof there builds on \cite[Theorem 1]{Kim}, cf.~Remark \ref{Rem:Kim}.
\end{Remark}

\section{Proof of Theorem \ref{Main}}
\label{sec_main}

\noindent
The idea of the proof of Theorem \ref{Main} is
to construct a curve on $A$
that is non-special and has a smooth $F$-rational point.
To achieve this, we 
\textquoteleft lift\textquoteright~a non-isotrivial curve from $\mathbb{A}^2$ to $A$.

\begin{Lemma}\label{nonisotrivial}
If $F/F_0$ is a transcendental field extension,
then there exists a smooth $F$-curve $C$ 
of absolute genus at least $2$ with $C(F)\neq\emptyset$ 
such that $C$ is not $\tilde{F}/\tilde{F}_0$-isotrivial.
\end{Lemma}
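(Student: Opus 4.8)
The plan is to produce an explicit family of plane curves over $F_0$ whose absolute genus grows and which specialize to genuinely non-isotrivial curves once we exploit the transcendence of $F/F_0$. First I would pick a transcendental element $t \in F \setminus \tilde{F}_0$, which exists precisely because $F/F_0$ is transcendental, and use it to write down an affine plane curve whose defining polynomial has $t$ as a coefficient in an essential way. The natural candidate is a superelliptic or hyperelliptic-type curve such as $C\colon y^2 = x^n + t$ (in odd characteristic) or an Artin--Schreier/Kummer variant adapted to characteristic $2$; for $n$ large enough the absolute genus is at least $2$ by the standard genus formula for such curves. The point $(x,y) = $ suitable base point (or the point at infinity) furnishes an $F$-rational point, so $C(F) \neq \emptyset$, and smoothness over $F$ holds away from a finite set of explicit bad points, which we can arrange to avoid.

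Next I would verify that such a curve is \emph{not} $\tilde{F}/\tilde{F}_0$-isotrivial. This is the crux of the argument. Isotriviality of $C$ over $\tilde{F}/\tilde{F}_0$ would mean that the smooth projective model of $C_{\tilde{F}}$ descends to $\tilde{F}_0$, equivalently that the corresponding point $c \in M_g(\tilde{F})$ on the coarse moduli space actually lies in $M_g(\tilde{F}_0)$. To rule this out I would compute a moduli invariant of $C$ that depends nontrivially on $t$ and takes a value in $\tilde{F}$ transcendental over $\tilde{F}_0$. For a family like $y^2 = x^n + t$ one can use the discriminant or an appropriate absolute invariant built from the branch locus, which is a rational (indeed polynomial) function of $t$ that is non-constant; since $t$ is transcendental over $\tilde{F}_0$, this invariant cannot equal the corresponding invariant of any curve defined over $\tilde{F}_0$. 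Put differently, the $j$-invariant-type coordinate of $C$ on $M_g$ is a non-constant function of the transcendental parameter $t$, so $c \notin M_g(\tilde{F}_0)$, and hence $C$ does not descend.

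The hard part will be making the non-isotriviality argument clean and uniform across all characteristics, since the genus formula, the smoothness locus, and the available moduli invariants all behave slightly differently in characteristic $2$ and in small residue characteristics dividing $n$. One way to sidestep a case-by-case moduli computation is to argue via the Grauert--Manin theorem (or directly via Theorem~\ref{roes}) in reverse: if $C$ were isotrivial it would, after a finite base change, be a constant family, and a constant family cannot have the particular parameter dependence we have engineered into the coefficients. Concretely, I would reduce to an intermediate field $F_1$ with $F_0 \subseteq F_1 \subseteq F$, $F_1/F_0$ finitely generated and transcendental, over which the curve and its parameter $t$ are already defined, and then exploit that the family over the curve $S$ with function field $F_1$ has non-constant moduli map $S \dashrightarrow M_g$.

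Finally I would assemble the pieces: choose $n$ large enough that $g(C) \geq 2$, record the explicit $F$-rational point and the smoothness of $C$ over $F$, and conclude that $C$ is the desired smooth $F$-curve of absolute genus at least $2$ with $C(F) \neq \emptyset$ that fails to be $\tilde{F}/\tilde{F}_0$-isotrivial. I expect the only genuinely delicate input to be the verification that the chosen moduli invariant is transcendental over $\tilde{F}_0$; everything else is a matter of selecting a sufficiently generic defining equation and invoking the standard formulas for genus and smoothness of plane (or superelliptic) curves.
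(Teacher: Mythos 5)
There is a genuine gap, and it sits exactly at the point you yourself flagged as the crux. Your candidate family $C\colon y^2 = x^n + t$ is in fact \emph{isotrivial}: over $\tilde{F}$ one can substitute $x \mapsto t^{1/n}x$ and $y \mapsto t^{1/2}y$, which turns the equation into $y^2 = x^n + 1$, a curve defined over the prime field. So $C_{\tilde{F}}$ descends to $\tilde{F}_0$ no matter how transcendental $t$ is, and the statement you want to prove about this curve is simply false. The proposed verification cannot be repaired for this family: the discriminant is not an isomorphism invariant (it rescales under exactly the change of variables above), and the honest invariant --- the $\mathrm{PGL}_2$-class of the branch locus, here the $n$-th roots of $-t$ together with $\infty$ --- is constant, since scaling by $t^{-1/n}$ carries it to the $n$-th roots of $-1$ together with $\infty$. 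The moral is that a parameter appearing in the defining equation does not mean the modulus varies; your fallback via Grauert--Manin "in reverse" suffers from the same problem, since it still requires the moduli map $S \dashrightarrow M_g$ to be non-constant, which is precisely what fails here.

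To repair the argument you either need a family whose branch points have genuinely varying cross-ratios (e.g.\ $y^2 = x(x-1)(x-2)(x-3)(x-t)$ in characteristic $\neq 2,3$, with separate constructions in small characteristics), or you can avoid higher-genus moduli computations altogether, as the paper does: take an elliptic curve $E$ over $F$ whose $j$-invariant is a non-constant rational function of $t$, so that $j(E) \in F \setminus \tilde{F}_0$ certifies that $E$ is not $\tilde{F}/\tilde{F}_0$-isotrivial (the $j$-invariant \emph{is} a complete, characteristic-free moduli invariant in genus $1$); then build a separable ramified cover $C \to E$ of genus $\geq 2$ carrying an $F$-rational point (e.g.\ $y^2 = f(x^2)$ over $y^2 = f(x)$, with a cubic variant in characteristic $2$ where $y^2 = f(x)$ degenerates). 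Lemma \ref{isotrivial}, applied to the dominant map $C \dashrightarrow E$, shows that isotriviality of $C$ would force isotriviality of $E$, a contradiction. This transfers the entire moduli computation to the one case where an explicit invariant exists in all characteristics, which is exactly the uniformity problem you anticipated but did not resolve.
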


\begin{proof}
Let $t\in F\setminus\tilde{F}_0$.
First assume that ${\rm char}(F)\neq 2$.
Let 
$$
 E:y^2=f(x)=(x-1)(x+1)(x-t^2)
$$ 
be the elliptic curve over $F$ with $j$-invariant 
$$
 j(E)=\frac{(16t^4 + 48)^3}{64t^8 - 128t^4 + 64}.
$$
Since $j(E)\in F\setminus\tilde{F}_0$,
it follows that $E$ is not $\tilde{F}/\tilde{F}_0$-isotrivial.
The map $(x,y)\mapsto (x^2,y)$
defines a separable morphism from
the curve 
$$
C:y^2=f(x^2)
$$ 
to $E$ which is ramified over $(0,t)\in E(F)$.
Therefore, $C$ is a smooth $F$-curve of absolute genus at least $2$ by the Riemann-Hurwitz formula,
and $C(F)\neq\emptyset$.
By Lemma \ref{isotrivial}, $C$ is not $\tilde{F}/\tilde{F}_0$-isotrivial.

If ${\rm char}(F)=2$, we can proceed similarly and let
$E:y^2+xy=f(x)=x^3+tx+1$ be the elliptic curve with $j$-invariant
$j(E)=(t^2+1)^{-1}$. The map $(x,y)\mapsto(x^3,y)$ defines a separable morphism 
from the curve $C: y^2+x^3y=f(x^3)$ to $E$ that is ramified over $(0,1)\in E(F)$.
\end{proof}

\begin{Lemma}\label{lift}
Let $F$ be a field. 
Let $X$ be an $F$-variety of dimension $n$ and $Y$ an $F$-variety of dimension $m<n$. Let 
$x\in X(F)$ be a smooth point of $X$ and let $y\in Y(F)$ a smooth point of $Y$. Then there 
exists an $F$-subvariety $V \subseteq X$ of dimension $\dim(V)=m$ such that $x$ is a smooth point of $V$,
and a dominant $F$-rational map
$V\dashrightarrow Y$ mapping $x$ to $y$.
\end{Lemma}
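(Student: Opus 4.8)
The plan is to realize $Y$ near $y$ in standard \'etale form over affine space and to transplant the resulting hypersurface equation into $X$ using local coordinates at $x$.

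First I would reduce to the case where $X$ and $Y$ are affine and smooth, replacing them by affine open neighbourhoods of $x$ and $y$ inside their smooth loci; this changes neither rational maps nor the local conditions at $x$ and $y$. Since $x\in X(F)$ is a smooth point of the $n$-dimensional variety $X$, there is a regular system of parameters $u_1,\dots,u_n\in\mathcal{O}_{X,x}$ with $u_i(x)=0$ and an \'etale morphism $(u_1,\dots,u_n)\colon U\to\Aa^n$ on a neighbourhood $U$ of $x$, sending $x$ to the origin. Likewise, since $y\in Y(F)$ is a smooth point, I would choose $s_1,\dots,s_m\in\mathcal{O}_{Y,y}$ with $s_i(y)=0$ whose differentials form a basis of the cotangent space, so that $s:=(s_1,\dots,s_m)\colon Y\to\Aa^m$ is \'etale at $y$; in particular $F(Y)/F(s_1,\dots,s_m)$ is finite separable. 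Note that $n\ge m+1$ because $m<n$.

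Next I would put $Y$ in standard \'etale form. Since $s$ is \'etale at $y$, the local ring $\mathcal{O}_{Y,y}$ is a standard \'etale algebra over $R:=\mathcal{O}_{\Aa^m,0}$, i.e.\ a localization of $R[T]/(g)$ for some monic $g\in R[T]$ with $\partial_T g$ a unit. After translating the primitive element $\theta$ (the image of $T$) by the scalar $\theta(y)\in F$ I may assume $\theta(y)=0$; writing $g=g(s_1,\dots,s_m,T)$ with coefficients regular at $0$, this gives $g(0,\dots,0,0)=0$ and $\partial_T g(0,\dots,0,0)\ne 0$. Equivalently, $F(Y)=F(s_1,\dots,s_m)(\theta)$ with $g$ the minimal polynomial of $\theta$, and $Y$ is birational to the hypersurface $\{g=0\}\subseteq\Aa^{m+1}$, with $y$ corresponding to the origin.

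Now I would construct $V$ and the map. On the smooth $(m+1)$-dimensional locus $U':=\{u_{m+2}=\dots=u_n=0\}\subseteq U$, on which $u_1,\dots,u_{m+1}$ are \'etale coordinates, let $V$ be the irreducible component through $x$ of the zero locus of $g(u_1,\dots,u_m,u_{m+1})$, and let $V$ also denote its closure in $X$. As $\partial_T g(0)\ne 0$, the differential of $g(u_1,\dots,u_{m+1})$ at $x$ is nonzero, so $V$ is smooth at $x$ of dimension $m$; being irreducible and smooth at the $F$-point $x$, it is geometrically integral, hence an $F$-subvariety of $X$ of dimension $m$. For the map I would take the local homomorphism $\mathcal{O}_{Y,y}\to\mathcal{O}_{V,x}$ determined by $s_i\mapsto u_i$ and $\theta\mapsto u_{m+1}$: it is well defined because the relation $g=0$ holds on $V$ and because any denominator, being a unit of $R[T]/(g)$ at $y$, takes a nonzero value at $x$; being local, the associated rational map $V\dashrightarrow Y$ is defined at $x$ and sends $x$ to $y$. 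Dominance follows from the cotangent computation: the differential at $x$ of $(u_1,\dots,u_m)|_V\colon V\to\Aa^m$ is an isomorphism, so this map is dominant, $u_1,\dots,u_m$ are algebraically independent on $V$, and the induced $F(s_1,\dots,s_m)$-algebra map $F(Y)\to F(V)$ out of a field is injective.

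The main obstacle, and the only point where positive characteristic could intervene, is the passage through the separating primitive element: one must know that $F(Y)/F(s_1,\dots,s_m)$ is separable and that $Y$ has a standard \'etale description at $y$ with $\partial_T g(0)\ne 0$, so that the transplanted equation is smooth at $x$ and the resulting map is defined at $x$ with value $y$. All of this is supplied by the smoothness of $y$ as an $F$-point, which guarantees that suitable local parameters $s_i$ make $Y\to\Aa^m$ \'etale there; the remaining verifications (smoothness of $V$ at $x$, geometric integrality, and injectivity of $F(Y)\hookrightarrow F(V)$) are then routine.
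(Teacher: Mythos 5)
Your proposal is correct and is essentially the paper's own proof written out in local coordinates: the paper likewise puts $Y$ in standard \'etale form over $\Aa^m$ near $y$, thereby immersing a neighbourhood $Y'$ of $y$ into $\Aa^{m+1}\subseteq\Aa^n$, takes an \'etale morphism from a neighbourhood of $x$ to $\Aa^n$ matching the two marked points, and defines $V$ as the closure of the component through $x$ of the preimage of $Y'$ under that \'etale morphism --- which is exactly your zero locus of $g(u_1,\dots,u_m,u_{m+1})$ on $\{u_{m+2}=\dots=u_n=0\}$. The only difference is presentational: the paper gets smoothness of $V$ near $x$ and dominance of $V\dashrightarrow Y$ at once from \'etaleness of the projection of the fibre product onto $Y'$, whereas you re-derive these by hand via the differential computation and the function-field injectivity argument.
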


\begin{proof}
There exists a smooth open neighbourhood $Y'$ of $y$ in $Y$ and an \'etale morphism $f: Y'\to \Aa^m$, see \cite[17.11.4]{EGAIV4}.
By \cite[I.3.14]{MilneEtale} this morphism $f$ is locally standard \'etale. Hence, after replacing $Y'$ by a smaller open neighbourhood of $y$ if necessary, there exists an affine open subscheme $U=\Spec(R)\subseteq \Aa^m$ such
that $f(Y')\subseteq U$ and $f|Y'\to U$ is standard \'etale, i.e. such that there exist
a polynomial $Q\in R[T]$ and a $U$-isomorphism of $Y'$
with an open subscheme of 
$\Spec(R[T]/(Q))$. 
In particular there exists an immersion

$$
 i: Y'\to \Spec(R[T])=U\times \Aa^1 \to \Aa^{m+1}\to \Aa^n.
$$
There exists a smooth open neighbourhood $X'$ of $x$ in $X$
and an \'etale morphism $g: X'\to \Aa^n$, again by \cite[17.11.4]{EGAIV4}, and we can assume that  $g(x)=i(y)$. We form the fibre product $Z=X'\times_{\Aa^n, g,i} Y'=g^{-1}(Y')$ and consider the diagram
\begin{diagram}
%&             &             & X\\
 %&             &             & \cup\\
 &Z          &\rTo^{j}   &X'& \subset X\\
 &\dTo_p &                   &\dTo_g\\
 Y\supset & Y'          &\rTo^{i}   &\Aa^n.\\
\end{diagram}
The projection $p$ is \'etale, $Y'$ is smooth and $\dim(Y')=m$. Hence $Z$ is smooth of dimension $m$. Furthermore $j$ is an immersion. Hence we can identify $Z$ with a closed subscheme of an open subscheme $X''$ of $X'$. We have $x\in Z(F)$.  Let $Z_0$ be the connected component of $Z$ passing through $x$ and let $V$ be the closure of $Z_0$ in $X$ (with the reduced induced subscheme structure). Then $Z_0$ is smooth and connected and $Z_0(F)\neq \emptyset$, hence it is geometrically integral,
so also 
%Now $F(Z_0)=F(V)$ because $Z_0$ is open in $V$. It follows that 
$V$ is geometrically integral, see \cite[4.6.3]{EGAIV2}.
Hence $V$ is a {\em subvariety} of $X$ with $\dim(V)=m$. 
Finally the \'etale morphism $p|Z_0\to Y'$ induces a dominant rational map 
$V\dashrightarrow Y$
mapping $x$ to $y$.
\end{proof}

\begin{Theorem}\label{Curve}
Let $F_1$ be a field which is not algebraic over a finite field, 
and let $A/F_1$ be a non-zero abelian variety. 
Then there exists a smooth $F_1$-curve $C_A$ with $C_A(F_1)\neq\emptyset$ and the following property:
If $F$ is an extension of $F_1$ with $|C_A(F)|=\infty$, 
then the rank of $A(F)$ is infinite.
\end{Theorem}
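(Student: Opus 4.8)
The plan is to produce a closed subcurve of $A$ (or of a small power of $A$) that is \emph{non-special} in the sense of Section 3 and carries an $F_1$-rational smooth point, and then to play it off against the Mordell--Lang statement Theorem \ref{GMthm}. The guiding principle is this: if such a $C_A\subseteq A$ is non-special over $\tilde{F}$ and the rank of $A(F)$ were finite, then taking $\Lambda=A(F)$ in Theorem \ref{GMthm} would force $A(F)\cap C_A(\tilde F)$ to be finite, whereas $C_A(F)\subseteq A(F)\cap C_A(\tilde F)$ is assumed infinite. To set up, I would write $p=\mathrm{char}(F_1)$ and take $F_0=\Ff_p$ (so $\tilde F_0=\tilde\Ff_p$); since $F_1$ is not algebraic over a finite field, $F_1/F_0$ is transcendental in positive characteristic, which is exactly what Lemma \ref{nonisotrivial} needs. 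As rank $A(F)$ is infinite if and only if rank $(A\times A)(F)$ is, and a curve furnished for $A\times A$ serves equally well for $A$, I may assume $\dim(A)\ge 2$, replacing $A$ by $A\times A$ if necessary.

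The core step is the construction of $C_A$. First I would invoke Lemma \ref{nonisotrivial} to obtain a smooth plane curve $D\subseteq\Aa^2$ over $F_1$ of absolute genus $\ge 2$, carrying a smooth point $d\in D(F_1)$, that is not $\tilde F_1/\tilde F_0$-isotrivial. Applying Lemma \ref{lift} with $X=A$, $Y=D$, the smooth point $0_A\in A(F_1)$ and the point $d$ -- legitimate because $\dim D=1<\dim A$ -- yields a closed one-dimensional $F_1$-subvariety $V\subseteq A$ with $0_A$ a smooth point and a dominant rational map $V\dashrightarrow D$ sending $0_A$ to $d$. I would then set $C_A:=V^{\mathrm{sm}}$, the smooth locus, a smooth $F_1$-curve with $0_A\in C_A(F_1)$.

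Next I would verify that $V$ is non-special, and that non-speciality persists geometrically. Since $V$ dominates $D$, its absolute genus is $\ge 2$, so $V$ is not elliptic; and by Lemma \ref{isotrivial} applied in contrapositive form, $V$ is not $\tilde F_1/\tilde F_0$-isotrivial. Crucially, Lemma \ref{isodown} guarantees that non-isotriviality is not destroyed under enlarging the base field, so for any algebraically closed $\tilde F\supseteq\tilde F_1$ the curve $V_{\tilde F}$ is still neither of absolute genus $1$ nor $\tilde F/\tilde F_0$-isotrivial; Proposition \ref{special} then tells me $V_{\tilde F}$ is not $\tilde F/\tilde F_0$-special. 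To finish, I would take an extension $F/F_1$ with $|C_A(F)|=\infty$, set $K=\tilde F$ and $K_0=\tilde\Ff_p$, and assume for contradiction that $\Lambda:=A(F)$ has finite rank in $A(K)$. As $C_A(F)\subseteq V(F)\subseteq A(F)=\Lambda$ and simultaneously $C_A(F)\subseteq V_K(K)$, the set $\Lambda\cap V_K(K)$ is infinite, so Theorem \ref{GMthm} applied to $A_K$, the closed subcurve $V_K$ and $\Lambda$ forces $V_K$ to be elliptic or birational to a curve over $\tilde\Ff_p$, i.e.\ $\tilde F/\tilde F_0$-special -- contradicting the previous step. Hence $A(F)$ has infinite rank.

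I expect the main obstacle to be precisely the construction of a non-special curve through a rational point of $A$, rather than the Mordell--Lang endgame, which is a one-line contradiction. This is where the transcendence of $F_1/F_0$ is indispensable (through Lemma \ref{nonisotrivial}), where the reduction to $\dim A\ge 2$ is forced (to license Lemma \ref{lift}), and where I must be careful that the non-speciality established over $\tilde F_1$ really descends into the geometric setting over $K$ (through Lemmas \ref{isotrivial} and \ref{isodown} together with Proposition \ref{special}). In positive characteristic the transcendence of $F_1/\Ff_p$ \emph{is} the hypothesis that $F_1$ is not algebraic over a finite field, so the construction always succeeds and Theorem \ref{GMthm} applies; the characteristic-zero situation is either entirely parallel, with the classical function-field Mordell--Lang theorem replacing Theorem \ref{GMthm} when $F_1/\Qq$ is transcendental, or else already covered by \cite{FP}.
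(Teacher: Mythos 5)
Your proposal is correct and takes essentially the same route as the paper's own proof: reduce to $\dim(A)\ge 2$ via $A\times A$, lift the non-isotrivial curve of Lemma \ref{nonisotrivial} into $A$ by Lemma \ref{lift}, get genus $\ge 2$ from Riemann--Hurwitz and non-isotriviality from Lemma \ref{isotrivial}, push this up to $\tilde F$ with Lemma \ref{isodown}, and contradict the finite-rank assumption via Theorem \ref{GMthm} (the paper phrases this step through Proposition \ref{GM}, which is the same content). Your handling of characteristic zero also matches the paper, which likewise defers that case to \cite[Theorem 2.6(a)]{FP}.
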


\begin{proof}
Replace $A$ by $A\times A$ to assume without loss of generality
that $A$ is of dimension at least $2$.
Take any smooth $F_1$-curve $C$ 
of absolute genus at least $2$ with $C(F_1)\neq\emptyset$.
By Lemma \ref{lift}
there exist an $F_1$-curve $D\subseteq A$
with a dominant $F_1$-rational map $\varphi\colon D\dashrightarrow C$ 
such that $D$ has a smooth $F_1$-rational point.
By the Riemann-Hurwitz formula,
$D$ is of absolute genus at least $2$.

If $F_1$ is of characteristic zero
and $F$ is an extension of $F_1$ with $|D(F)|=\infty$,
then the Mordell-Lang conjecture in characteristic zero implies that
the rank of $A(F)$ is infinite, cf.~\cite[Theorem 2.6(a)]{FP}.

If $F_1$ is of characteristic $p>0$,
then $K_1=\tilde{F}_1$ is transcendental over $K_0=\tilde{\mathbb{F}}_p$
and so we can choose the curve $C$ to be not $K_1/K_0$-isotrivial,
see Lemma \ref{nonisotrivial},
and then also $D$ is not $K_1/K_0$-isotrivial by Lemma \ref{isotrivial}. 
If $F$ is an extension of $F_1$ with $|D(F)|=\infty$,
let $K=\tilde{F}$ and $\Lambda=A(F)$,
and suppose that the rank of $\Lambda$ is finite.
Since $\Lambda\cap D(K)=D(F)$ is infinite and the absolute genus of $D$ is at least $2$,
Proposition \ref{GM} implies
that $D$ is $K/K_0$-isotrivial,  and so Lemma \ref{isodown} implies that
$D$ is $K_1/K_0$-isotrivial, a contradiction.
Thus the rank of $A(F)$ must be infinite.

Therefore, in both cases
$D$ contains a smooth open subcurve $C_A$ that satisfies
the requirements. 
\end{proof}

Theorem \ref{Main} is an immediate consequence of
Theorem \ref{Curve} for 
$F_1=F$,
since if $F$ is ample, then $|C_A(F)|=\infty$.

\section{Corollaries and Applications}
\label{sec:app}

\subsection{Uncountable rank}

It turns out that in the situation of Theorem \ref{Main},
not only is the rank of $A(F)$ infinite, but it is as large as possible.

\begin{Corollary}
Let $F$ be an ample field that is not algebraic over a finite field
and let $A/F$ be a non-zero abelian variety. Then 
the rank of $A(F)$ equals the cardinality of $F$.
\end{Corollary}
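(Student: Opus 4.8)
Write $\kappa=|F|$ and note that $F$ is infinite, since a field algebraic over a finite field is itself finite. The plan is to prove the two inequalities $\mathrm{rank}(A(F))\le\kappa$ and $\mathrm{rank}(A(F))\ge\kappa$ separately. The upper bound is soft: since $A$ is covered by finitely many affine open subschemes, $|A(F)|\le|F|=\kappa$, and therefore $\mathrm{rank}(A(F))=\dim_{\mathbb{Q}}(A(F)\otimes\mathbb{Q})\le|A(F)\otimes\mathbb{Q}|\le|A(F)|\le\kappa$, using that $A(F)\otimes\mathbb{Q}=\varinjlim_n\tfrac1n A(F)$ has cardinality at most $\aleph_0\cdot|A(F)|=|A(F)|$.

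For the lower bound I would distinguish two cases according to the size of $\kappa$. If $F$ is countable, then $\kappa=\aleph_0$ and Theorem \ref{Main} already gives that $\mathrm{rank}(A(F))$ is infinite, hence equal to $\aleph_0=\kappa$. If $F$ is uncountable, the strategy is first to pin down the cardinality $|A(F)|=\kappa$ and then to convert this cardinality into rank. For the cardinality, I would use the curve $C_A\subseteq A$ furnished by Theorem \ref{Curve}: it is a smooth $F$-curve with $C_A(F)\ne\emptyset$, and invoking the standard sharpening of the definition of ampleness — over an ample field a smooth curve with a rational point has exactly $|F|$ rational points (see the literature on ample fields, e.g.\ \cite{Pop}) — we get $|C_A(F)|=\kappa$, whence $\kappa=|C_A(F)|\le|A(F)|\le\kappa$ and so $|A(F)|=\kappa$.

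It remains to pass from cardinality to rank. The torsion subgroup $A(F)_{\mathrm{tors}}=\bigcup_n A[n](F)$ is a countable union of the finite groups $A[n](F)\subseteq A[n](\tilde F)$, hence countable; consequently $G:=A(F)/A(F)_{\mathrm{tors}}$ is torsion-free with $|G|=|A(F)|=\kappa$, the last equality because $\kappa$ is uncountable. Since $G$ is torsion-free it embeds into its divisible hull $G\otimes\mathbb{Q}\cong\mathbb{Q}^{(\mathrm{rank}(G))}$, whose cardinality is $\max(\mathrm{rank}(G),\aleph_0)$; comparing cardinalities gives $\mathrm{rank}(A(F))=\mathrm{rank}(G)\ge|G|=\kappa$, and together with the upper bound this yields $\mathrm{rank}(A(F))=\kappa$. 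The one place where something genuinely new about ample fields enters — and hence the main obstacle — is the cardinality statement $|C_A(F)|=|F|$: the defining property only supplies $|C_A(F)|=\infty$, and upgrading this to the full cardinality of $F$ is exactly what makes the rank as large as possible.
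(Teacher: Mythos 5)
Your proof is correct and takes essentially the same route as the paper: split into the countable case (handled by Theorem \ref{Main}) and the uncountable case, where a cardinality statement for ample fields plus countability of torsion upgrades $|A(F)|=|F|$ to $\mathrm{rank}(A(F))=|F|$. The only difference is cosmetic: the paper applies the cardinality fact directly to the smooth variety $A$ (citing \cite[Prop.~3.3]{Harbater}, which is the precise reference for the ``standard sharpening'' you invoke), rather than going through the curve $C_A$ of Theorem \ref{Curve}.
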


\begin{proof}
If $F$ is countable, then the claim follows from Theorem \ref{Main}.
If $F$ is uncountable, then $|A(F)|=|F|$ by \cite[Prop. 3.3]{Harbater}.
Since $A_{\rm tor}(\tilde{F})$ is countable, this implies that
$A(F)\otimes\mathbb{Q}$ cannot have a basis of less than $|F|$ elements.
\end{proof}

\subsection{Large extensions of Hilbertian fields}

In \cite[Theorem 1.4]{petersen}, the second author generalizes \cite[Theorem 9.1]{FreyJarden1974}
from finitely generated fields to arbitrary Hilbertian fields.
This generalization also follows easily from our results.
In the following, {\bf almost all}
is to be understood in the sense of Haar measure,
and for an $e$-tuple $\bfsigma\in\Gal(F)^e$,
where $\Gal(F)={\rm Aut}(F_{\rm sep}/F)$ is the absolute Galois group of $F$,
$F_{\rm sep}(\bfsigma)$ is the fixed field in $F_{\rm sep}$
of the group generated by $\sigma_1,\dots,\sigma_e$.  

\begin{Corollary}
If $F$ is a Hilbertian field, $A/F$ a non-zero abelian variety,
and $e$ a non-negative integer,
then $A(F_{\rm sep}(\bfsigma))$ has infinite rank
for almost all $\bfsigma\in\Gal(F)^e$.
\end{Corollary}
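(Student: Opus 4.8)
The plan is to combine Theorem \ref{Main} with the key property of Hilbertian fields that lets us pass to the large extensions $F_{\rm sep}(\bfsigma)$. The decisive fact, which I would invoke, is that for a Hilbertian field $F$ and almost all $\bfsigma \in \Gal(F)^e$, the field $F_{\rm sep}(\bfsigma)$ is \emph{ample} (indeed pseudo algebraically closed, hence ample). This is a celebrated result going back to the work around the Frey--Jarden circle of ideas, and it reduces the corollary to checking that the hypotheses of Theorem \ref{Main} are met after base change.

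With that in hand, first I would fix $A/F$ non-zero and consider its base change $A_{F_{\rm sep}(\bfsigma)}$, which is again a non-zero abelian variety over the extension field. Second, I would verify the remaining hypothesis of Theorem \ref{Main}: that $F_{\rm sep}(\bfsigma)$ is not algebraic over a finite field. This needs a short argument. If $F$ itself is not algebraic over a finite field, then neither is any extension of it, so we are done immediately. If $F$ \emph{is} algebraic over a finite field, then $F$ is not Hilbertian (a Hilbertian field is never algebraic over a finite field, since finite fields and their algebraic extensions fail the Hilbert irreducibility property), contradicting the hypothesis; hence this case does not arise. Thus $F_{\rm sep}(\bfsigma)$ is automatically not algebraic over a finite field.

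Having established that $F_{\rm sep}(\bfsigma)$ is ample and not algebraic over a finite field for almost all $\bfsigma$, I would apply Theorem \ref{Main} directly to the abelian variety $A_{F_{\rm sep}(\bfsigma)}$ over the field $F_{\rm sep}(\bfsigma)$. The conclusion is that the rank of $A(F_{\rm sep}(\bfsigma))$ is infinite, which is exactly the assertion, holding for almost all $\bfsigma \in \Gal(F)^e$.

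The only genuine input beyond Theorem \ref{Main} is the ampleness of $F_{\rm sep}(\bfsigma)$ for almost all $\bfsigma$, and this is precisely where I expect the main work to reside if one wanted a self-contained account; in the context of this paper, however, it is a standard and citable fact from the theory of ample fields and Galois theory (the surveys \cite{BarySorokerFehm,Pop_survey} cover exactly this). Everything else is formal: base change preserves non-vanishing of the abelian variety, and the ``not algebraic over a finite field'' condition is inherited (and in fact forced by Hilbertianity of $F$). So the proof is essentially a clean reduction of this corollary to the main theorem via the ampleness of large Galois extensions of Hilbertian fields.
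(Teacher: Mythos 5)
Your reduction to Theorem \ref{Main} rests on the claim that for a Hilbertian field $F$, the fields $F_{\rm sep}(\bfsigma)$ are PAC (hence ample) for almost all $\bfsigma\in\Gal(F)^e$. That claim is only known when $F$ is \emph{countable} (Jarden's theorem, cf.~\cite[Theorem 18.6.1]{friedjarden}), whereas the corollary assumes nothing about the cardinality of $F$. The known proofs need countability in an essential way: one must intersect, over the (for countable $F$, countably many) absolutely irreducible varieties involved, the measure-one sets of $\bfsigma$ for which $F_{\rm sep}(\bfsigma)$ acquires a rational point; for uncountable $F$ this would be an uncountable intersection of measure-one sets, which need not have measure one. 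The authors flag exactly this obstruction immediately after their proof: ``We do not know if almost all $F_{\rm sep}(\bfsigma)$ are ample if $F$ is not countable, therefore we had to apply Theorem \ref{Curve}, and not Theorem \ref{Main} directly.'' So your argument, as written, proves the corollary only for countable Hilbertian fields; it is essentially the argument the paper uses for Corollary \ref{QtotS}, where countability \emph{is} assumed. (Your secondary point, that a Hilbertian field is never algebraic over a finite field, is correct and unproblematic.)

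The paper's proof avoids the issue by working with the single curve $C_A$ produced by Theorem \ref{Curve}: Hilbertianity of $F$ yields a linearly disjoint sequence $F_1,F_2,\dots$ of extensions of a fixed degree with $C_A(F_i)\setminus C_A(F)\neq\emptyset$; Borel--Cantelli, now applied to only countably many events (one per $F_i$), shows that for almost all $\bfsigma$ the field $F_{\rm sep}(\bfsigma)$ contains infinitely many of the $F_i$, hence $|C_A(F_{\rm sep}(\bfsigma))|=\infty$; and Theorem \ref{Curve} converts this into infinite rank of $A(F_{\rm sep}(\bfsigma))$. The whole point of Theorem \ref{Curve} is that it localizes the measure-theoretic argument to one fixed curve, so that only countably many measure-one sets are intersected regardless of the cardinality of $F$. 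To repair your proof you would either have to add the hypothesis that $F$ is countable, or restructure it along these lines.
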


\begin{proof}
Let $C_A$ be the $F$-curve associated to $A$ by Theorem \ref{Curve}.
Since $F$ is Hilbertian, there exists a linearly disjoint sequence
$F_1,F_2,\dots$ of extensions of $F$ of fixed degree $d$ such that
$C_A(F_i)\setminus C_A(F)\neq\emptyset$ for all $i$, cf.~\cite[Proof of Theorem 18.6.1, Part A]{friedjarden}.
By Borel-Cantelli, the set of $\bfsigma\in\Gal(F)^e$ for which
$F_{\rm sep}(\bfsigma)$ contains infinitely many of the $F_i$
has measure one, cf.~\cite[18.5.3(b)]{friedjarden}.
For those $\bfsigma$ we have that $|C_A(F_{\rm sep}(\bfsigma))|=\infty$,
and hence that the rank of $A(F_{\rm sep}(\bfsigma))$
is infinite by Theorem \ref{Curve}.
\end{proof}

We do not know if almost all $F_{\rm sep}(\bfsigma)$ are ample
if $F$ is not countable, therefore we had to apply Theorem \ref{Curve},
and not Theorem \ref{Main} directly.

\subsection{Fields of totally $S$-adic numbers}

We give a 
simplified proof and a generalization 
of \cite[Theorem 1.3]{FP}.
If $S$ is a finite set of local primes of a field $F$
(see \cite[p.2]{FP}),
the field $F_{{\rm tot},S}$ of {\bf totally $S$-adic numbers} over $F$ is defined as
the maximal Galois extension of $F$ contained in all completions 
$\hat{F}_{\mf p}$, ${\mf p}\in S$.
For  $\bfsigma\in\Gal(F)^e$,
let $F_{\rm sep}[\bfsigma]$ be the maximal Galois extension of $F$ in
the fixed field $F_{\rm sep}(\bfsigma)$.

\begin{Corollary}\label{QtotS}
Let $F$ be a countable Hilbertian field,
$S$ a finite set of local primes of $F$,
and $e$ a non-negative integer.
If $A/F$ is a non-zero abelian variety,
then $A(F_{{\rm tot},S}\cap F_{\rm sep}[\bfsigma])$ has infinite rank
for almost all $\bfsigma\in\Gal(F)^e$.
\end{Corollary}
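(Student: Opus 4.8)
The plan is to combine the curve $C_A$ furnished by Theorem~\ref{Curve} with the known ampleness of $F_{{\rm tot},S}$ and a Borel--Cantelli argument adapted to the field $F_{{\rm tot},S}\cap F_{\rm sep}[\bfsigma]$. First I would apply Theorem~\ref{Curve} with $F_1=F$ to obtain a smooth $F$-curve $C_A$ with $C_A(F)\neq\emptyset$ such that any extension $F'/F$ with $|C_A(F')|=\infty$ forces $A(F')$ to have infinite rank. The goal is then reduced to showing that $|C_A(F_{{\rm tot},S}\cap F_{\rm sep}[\bfsigma])|=\infty$ for almost all $\bfsigma$.

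The key input is that $F_{{\rm tot},S}$ is ample: this is precisely the kind of field (a field of totally $S$-adic numbers, built from Henselian completions $\hat F_{\mf p}$) known to be ample, so since $C_A$ has an $F$-rational point it has a point over $\hat F_{\mf p}$ for each ${\mf p}\in S$, and ampleness of $F_{{\rm tot},S}$ gives $|C_A(F_{{\rm tot},S})|=\infty$. Next, because $F$ is countable and Hilbertian, I would produce a linearly disjoint sequence $F_1,F_2,\dots$ of finite extensions of $F$, of some fixed degree $d$, each contained in $F_{{\rm tot},S}$ and each contributing a new point $C_A(F_i)\setminus C_A(F)\neq\emptyset$; the containment of the $F_i$ in $F_{{\rm tot},S}$ is arranged by a Hilbertianity-plus-local-approximation argument in the spirit of \cite[Proof of Theorem 18.6.1]{friedjarden}, choosing the specialization points to lie simultaneously near the chosen $\hat F_{\mf p}$-points. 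The main obstacle I anticipate is exactly this step: one must realize the new points over fields that are \emph{simultaneously} finite over $F$, totally $S$-adic, and Galois (or at least have controlled behaviour inside $F_{\rm sep}[\bfsigma]$), which requires a careful weak-approximation argument combined with the Hilbert irreducibility theorem rather than a single off-the-shelf citation.

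Finally I would run the Borel--Cantelli argument of \cite[18.5.3(b)]{friedjarden}: for a linearly disjoint sequence $(F_i)$ of extensions of bounded degree, the set of $\bfsigma\in\Gal(F)^e$ whose fixed field $F_{\rm sep}(\bfsigma)$ contains infinitely many $F_i$ has Haar measure one. Since each $F_i$ lies in $F_{{\rm tot},S}$ by construction and is Galois over $F$ (so $F_i\subseteq F_{\rm sep}[\bfsigma]$ whenever $F_i\subseteq F_{\rm sep}(\bfsigma)$), for almost all $\bfsigma$ infinitely many $F_i$ are contained in $F_{{\rm tot},S}\cap F_{\rm sep}[\bfsigma]$, whence $|C_A(F_{{\rm tot},S}\cap F_{\rm sep}[\bfsigma])|=\infty$. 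Theorem~\ref{Curve} then yields that $A(F_{{\rm tot},S}\cap F_{\rm sep}[\bfsigma])$ has infinite rank for almost all $\bfsigma$, as claimed. The countability hypothesis on $F$ is used to ensure the sequence $(F_i)$ can be enumerated and that the measure-theoretic argument applies cleanly.
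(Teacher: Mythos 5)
Your overall skeleton (Theorem \ref{Curve} plus Borel--Cantelli) is sound, and your final reductions are correct: if one could produce a linearly disjoint sequence of finite \emph{Galois} extensions $F_i/F$ of bounded degree, each contained in $F_{{\rm tot},S}$ and each containing a new point of $C_A$, then \cite[18.5.3(b)]{friedjarden} and Theorem \ref{Curve} would finish the proof exactly as you say. But the step you yourself flag as ``the main obstacle'' is a genuine gap, not a technicality, and it cannot be filled by Hilbert irreducibility plus weak approximation. Concretely: the only point you control is the single point $p_0\in C_A(F)$. For any map $f\colon C_A\to\Pp^1$ of degree $d$, \'etale over $t_0=f(p_0)$, and any $t\in F$ that is $\mf p$-adically close to $t_0$, Hensel's lemma produces only \emph{one} point of the fibre $f^{-1}(t)$ rational over $\hat{F}_{\mf p}$; the other $d-1$ points have the same local behaviour as the uncontrolled points of $f^{-1}(t_0)$. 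So the field generated by a fibre point embeds into each $\hat{F}_{\mf p}$, but that is far weaker than being totally $S$-adic: since $F_{{\rm tot},S}$ is Galois over $F$, a finite extension lies in it only if \emph{all} of its conjugates lie in each $\hat{F}_{\mf p}$, i.e.\ only if the whole fibre is $\hat{F}_{\mf p}$-rational --- equivalently, only if the Galois closure of the cover $C_A\to\Pp^1$ has $\hat{F}_{\mf p}$-points over $t$. That Galois closure is a curve of high genus with no a priori local points, and manufacturing totally split fibres at all $\mf p\in S$ simultaneously (plus the density of Hilbert sets in the $S$-adic topology, which your phrase ``choosing the specialization points near the chosen $\hat{F}_{\mf p}$-points'' silently assumes) is precisely the hard content of the Geyer--Jarden theorem \cite{GeyerJarden2002} and of Moret-Bailly-type existence theorems. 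Your appeal to ampleness of $F_{{\rm tot},S}$ does not help here: the points of $C_A(F_{{\rm tot},S})$ it yields have unbounded degree over $F$ and uncontrolled Galois closures, so they cannot feed the Borel--Cantelli argument.

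For comparison, the paper's proof is two lines: since $F$ is countable and Hilbertian, it quotes \cite{GeyerJarden2002} for the statement that $F_{{\rm tot},S}\cap F_{\rm sep}[\bfsigma]$ is itself ample for almost all $\bfsigma\in\Gal(F)^e$, and then applies Theorem \ref{Main} directly (Theorem \ref{Curve} is not needed). This is also where countability of $F$ is genuinely used --- not, as you suggest, to enumerate the $F_i$ or to run the measure theory. So either you import the Geyer--Jarden theorem, at which point your argument collapses to the paper's shorter one, or you must actually carry out the construction of bounded-degree, Galois, totally $S$-adic, linearly disjoint extensions with new points on $C_A$, which amounts to reproving the key part of \cite{GeyerJarden2002}.
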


\begin{proof} 
Geyer and Jarden prove in \cite{GeyerJarden2002} that the field
$F_{{\rm tot},S}\cap F_{\rm sep}[\bfsigma]$ is ample for almost all 
$\bfsigma\in \Gal(F)^e$. Hence, Theorem \ref{Main} implies the claim.
\end{proof}

This also generalizes 
\cite[Theorem 2.1]{FreyJarden1974}, \cite[Theorem B]{geyerjarden2006}, and
a special case of \cite[Theorem A.1]{petersen}.

\subsection{Larsen's conjecture}

Our last application of Theorem \ref{Main} is a conditional result.
Among the numerous conjectures about which fields are ample,
the following conjecture (see e.g.~\cite{JunkerKoenigsmann}, \cite[\S4.2]{BarySorokerFehm})
is particularly interesting for the questions considered in this work.

\begin{Conjecture}[Koenigsmann]\label{Koenigsmann}
If $F$ is an infinite field with finitely generated absolute Galois group,
then $F$ is ample.
\end{Conjecture}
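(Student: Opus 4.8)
The plan is to reduce ampleness of $F$ to the presence of one of the structures already known to force ampleness, namely a nontrivial Henselian valuation, a real ordering, or separable closedness, and then to extract such a structure from the sole hypothesis that $\Gal(F)$ is topologically finitely generated while $F$ is infinite. The natural framework is the Galois-theoretic detection of valuations and orderings developed by Koenigsmann, Efrat, and others: one studies the maximal pro-$p$ quotient of $\Gal(F)$ and the cohomology $H^*(\Gal(F),\mathbb{F}_p)$ and attempts to read off arithmetic invariants of $F$ purely from this group-theoretic data. Finite generation of $\Gal(F)$ severely constrains these invariants, and the hope is that it leaves room only for fields that are already ample.

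Concretely, the cases in which $\Gal(F)$ is finite are settled by the Artin--Schreier theorem: then $\Gal(F)$ is either trivial, so that $F$ is separably closed, or isomorphic to $\mathbb{Z}/2\mathbb{Z}$, so that $F$ is real closed, and in both cases $F$ is ample. The substance of the conjecture therefore lies entirely in the case of an infinite but finitely generated $\Gal(F)$, where I would attempt to show that $F$ carries a nontrivial Henselian valuation. The main tool would be the theory of the canonical Henselian valuation together with cohomological criteria for its existence, the aim being to produce a valuation whose residue field and value group are compatible with the finiteness of the generating set. The archetype to keep in mind is the family of $p$-adically closed fields, whose absolute Galois groups were computed by Jannsen--Wingberg and are indeed finitely generated; the conjecture in effect predicts that these, their finite extensions, and the separably and real closed fields exhaust the infinite examples.

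The hard part --- and the reason this remains a conjecture rather than a theorem --- is precisely this last step: there is currently no classification of infinite fields with finitely generated absolute Galois group, and no known mechanism that forces such a field to be Henselian valued or real closed once it is neither separably nor real closed. In particular, one cannot rule out a priori an exotic infinite field whose Galois group happens to be finitely generated yet which carries none of the ample-making structures. I therefore expect any such argument to stall exactly where the group-theoretic hypothesis must be converted into a valuation or ordering, and overcoming this obstacle appears to be equivalent to settling the conjecture itself. Accordingly, in the present work the statement is used only as a hypothesis: combined with Theorem \ref{Main}, it would yield infinite rank for every non-zero abelian variety over any infinite field that is not algebraic over a finite field and has finitely generated absolute Galois group.
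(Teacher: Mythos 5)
This statement is Conjecture \ref{Koenigsmann} in the paper: an open conjecture of Koenigsmann that the authors state \emph{without proof} and use only as a hypothesis (in the corollary deducing Larsen's conjecture from it via Theorem \ref{Main}), so there is no proof in the paper to compare against. Your treatment is exactly right --- the finite-Galois-group cases via Artin--Schreier (separably closed or real closed, both ample) are correct, the identification of the genuinely open step (converting finite generation of $\Gal(F)$ into a Henselian valuation or ordering) is accurate, and declining to fabricate a proof of what is in fact an open conjecture is the correct call.
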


One of the consequences of Theorem \ref{Main}
is that a proof of Conjecture \ref{Koenigsmann} would immediately prove
the following conjecture of Larsen, see
\cite[Question 1]{Larsen2003}, \cite{Im2006}, \cite{LarsenIm},
\cite{ImLarsen19}.

\begin{Conjecture}[Larsen]\label{Larsen}
Let $F$ be an infinite finitely generated field,
$A/F$ a non-zero abelian variety,
and $e$ a non-negative integer.
Then $A(F_{\rm sep}(\bfsigma))$ has infinite rank
for {\em all} $\bfsigma\in{\rm Gal}(F)^e$.
\end{Conjecture}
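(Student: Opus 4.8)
The plan is to derive Larsen's conjecture from Theorem~\ref{Main} by reducing it to the ampleness of the fields $F_{\rm sep}(\bfsigma)$. First I note that each $F_{\rm sep}(\bfsigma)$ contains $F$, and an infinite finitely generated field is either of characteristic~$0$ or has positive transcendence degree over its prime field, hence is not algebraic over a finite field; the same therefore holds for the overfield $F_{\rm sep}(\bfsigma)$, which is moreover infinite. Consequently Theorem~\ref{Main} applies to the base change of $A$ to $F_{\rm sep}(\bfsigma)$ as soon as $F_{\rm sep}(\bfsigma)$ is ample, and the whole conjecture reduces to proving that $F_{\rm sep}(\bfsigma)$ is ample for every $\bfsigma\in\Gal(F)^e$.

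The case $e=0$ is immediate: then $\bfsigma$ is the empty tuple, the group it generates is trivial, and $F_{\rm sep}(\bfsigma)=F_{\rm sep}$ is separably closed, hence ample, which settles this instance. For $e\ge 1$ the relevant structural input is that $\Gal(F_{\rm sep}(\bfsigma))={\rm Aut}(F_{\rm sep}/F_{\rm sep}(\bfsigma))$ is the closed subgroup of $\Gal(F)$ topologically generated by $\sigma_1,\dots,\sigma_e$, and is therefore generated, as a profinite group, by at most $e$ elements; since $F_{\rm sep}(\bfsigma)$ is infinite, Conjecture~\ref{Koenigsmann} predicts precisely that it is ample. The main obstacle --- in fact the entire remaining difficulty --- is to establish this ampleness unconditionally, which for $e\ge 1$ is exactly the assertion of Koenigsmann's conjecture for the fields $F_{\rm sep}(\bfsigma)$ and is at present open.

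Finally, I would examine whether the full force of ampleness can be avoided by working instead with the curve $C_A$ furnished by Theorem~\ref{Curve} and proving directly that $|C_A(F_{\rm sep}(\bfsigma))|=\infty$ for every single $\bfsigma$. The Hilbertian Borel--Cantelli argument, applied to a linearly disjoint sequence of finite extensions $F_i$ each realizing a new point of $C_A$, produces infinitely many such points for \emph{almost all} $\bfsigma$, but it gives no control over the measure-zero exceptional set, whereas the conjecture demands \emph{all} $\bfsigma$. Upgrading this from ``almost all'' to ``all'' would require genuinely new, uniform control over the point sets $C_A(F_{\rm sep}(\bfsigma))$ as $\bfsigma$ ranges over $\Gal(F)^e$, and this is where I expect the argument to stand or fall.
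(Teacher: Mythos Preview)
Your analysis is accurate, but note that the statement you are asked to prove is stated in the paper as a \emph{conjecture}, and the paper does not claim to prove it unconditionally. What the paper does prove is the Corollary immediately following it: that Conjecture~\ref{Koenigsmann} implies Conjecture~\ref{Larsen}. The argument there is exactly the reduction you give in your first two paragraphs --- the absolute Galois group of $F_{\rm sep}(\bfsigma)$ is topologically generated by $\sigma_1,\dots,\sigma_e$, hence finitely generated, so Koenigsmann's conjecture would make $F_{\rm sep}(\bfsigma)$ ample, and then Theorem~\ref{Main} finishes.

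So your proposal does not prove the conjecture, and you correctly identify why: the ampleness of $F_{\rm sep}(\bfsigma)$ for \emph{every} $\bfsigma$ is precisely the open input, and the alternative route through Theorem~\ref{Curve} and Borel--Cantelli only yields the ``almost all'' statement already handled in Section~\ref{sec:app}. In short, your write-up matches the paper's own conditional argument and its assessment of the remaining obstacle; there is nothing to correct, but also no unconditional proof to be had here.
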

 
\begin{Corollary}
Conjecture \ref{Koenigsmann} implies Conjecture \ref{Larsen}.
\end{Corollary}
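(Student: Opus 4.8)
The plan is to apply Theorem~\ref{Main} to the field $F'=F_{\rm sep}(\bfsigma)$ and the base change $A_{F'}$, once we have checked that $F'$ satisfies the two hypotheses of that theorem, namely that it is ample and that it is not algebraic over a finite field. I would fix an arbitrary tuple $\bfsigma=(\sigma_1,\dots,\sigma_e)\in\Gal(F)^e$ and run the argument for this single tuple; since nothing below places any restriction on $\bfsigma$, this delivers the conclusion for \emph{all} tuples, which is exactly the strengthening (``for all'' rather than ``for almost all'') demanded by Conjecture~\ref{Larsen}.

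First I would pin down the absolute Galois group of $F'$. As $F'\subseteq F_{\rm sep}$ and every element of $F_{\rm sep}$ is separable over $F$, hence over $F'\supseteq F$, the extension $F_{\rm sep}/F'$ is separable and $F_{\rm sep}$ is a separable closure of $F'$. Thus $\Gal(F')=\Gal(F_{\rm sep}/F')$, which by the fundamental theorem of Galois theory is the closed subgroup $\overline{\langle\sigma_1,\dots,\sigma_e\rangle}$ topologically generated by $\sigma_1,\dots,\sigma_e$. In particular $\Gal(F')$ is topologically finitely generated. Moreover $F\subseteq F'$ and $F$ is infinite, so $F'$ is infinite. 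Hence Conjecture~\ref{Koenigsmann} applies to $F'$ and shows that $F'$ is ample.

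Next I would verify that $F'$ is not algebraic over a finite field. In characteristic zero this is automatic, since a field of characteristic zero contains no finite subfield. In characteristic $p>0$ the field $F$ is a finitely generated extension of $\mathbb{F}_p$; if it were algebraic over $\mathbb{F}_p$ it would be a finite extension of $\mathbb{F}_p$ and thus finite, contradicting that $F$ is infinite. Therefore $\mathrm{trdeg}(F/\mathbb{F}_p)\ge 1$, so $F$, and a fortiori $F'\supseteq F$, contains an element transcendental over the prime field, whence $F'$ is not algebraic over any finite field.

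Finally, since $A/F$ is non-zero, the base change $A_{F'}$ is a non-zero abelian variety over the ample field $F'$, which is not algebraic over a finite field, so Theorem~\ref{Main} yields that the rank of $A_{F'}(F')=A(F_{\rm sep}(\bfsigma))$ is infinite. As $\bfsigma$ was arbitrary, this proves Conjecture~\ref{Larsen}. The only step carrying content beyond bookkeeping is the verification that $F'$ is not algebraic over a finite field: this is the single point at which the hypothesis that $F$ is \emph{finitely generated} (and not merely infinite) is genuinely used, whereas everything else is a routine combination of Galois theory, Conjecture~\ref{Koenigsmann}, and Theorem~\ref{Main}.
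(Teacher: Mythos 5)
Your proof is correct and takes essentially the same approach as the paper: identify $\Gal(F_{\rm sep}(\bfsigma))$ as the closed subgroup topologically generated by $\bfsigma$, invoke Conjecture \ref{Koenigsmann} to get ampleness, and apply Theorem \ref{Main}. The paper's proof is just a two-sentence compression of this argument; your additional verifications (that $F_{\rm sep}$ is a separable closure of $F_{\rm sep}(\bfsigma)$, and that an infinite finitely generated field is not algebraic over a finite field) are exactly the details the paper leaves implicit.
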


\begin{proof}
The absolute Galois group of $F_{\rm sep}(\bfsigma)$ is generated by $\bfsigma$,
and hence is finitely generated. 
Therefore, Theorem \ref{Main} implies the claim.
\end{proof}

\section*{Acknowledgments}

\noindent
We would like to thank Moshe Jarden for his constant  encouragement regarding this paper.
This work was supported by the European Commission under contract MRTN-CT-2006-035495,
and the Minkowski Center for Geometry at Tel Aviv University, established by the Minerva Foundation. 
S.P. was supported by the NCN grant OPUS16 UMO 2018/31/B/ST1/01474 of the 
Polish ministry of science.

\bigskip
%\bibliography{lit}

\end{document}